\documentclass[pdflatex,sn-mathphys-num]{sn-jnl}% Math and Physical Sciences Numbered Reference Style
\usepackage{graphicx}%
\usepackage{multirow}%
\usepackage{amsmath,amssymb,amsfonts}%
\usepackage{amsthm}%
\usepackage{mathrsfs}%
\usepackage[title]{appendix}%
\usepackage{xcolor}%
\usepackage{textcomp}%
\usepackage{manyfoot}%
\usepackage{booktabs}%
\usepackage{algorithm}%
\usepackage{algorithmicx}%
\usepackage{algpseudocode}%
\usepackage{listings}%
\usepackage{subcaption}
\theoremstyle{thmstyleone}%
\newtheorem{theorem}{Theorem}%  meant for continuous numbers
\newtheorem{proposition}[theorem]{Proposition}% 

\theoremstyle{thmstyletwo}%
\newtheorem{remark}{Remark}%
\newtheorem{lemma}{Lemma}%
\newtheorem{corollary}{Corollary}

\theoremstyle{thmstylethree}%

\usepackage[normalem]{ulem}
\usepackage{subcaption}

\newcommand{\pe}{\pmb{e}}

\newcommand{\pf}{\pmb{f}}

\newcommand{\pI}{\pmb{I}}

\newcommand{\pA}{\pmb{A}}

\newcommand{\pF}{\pmb{F}}

\newcommand{\pU}{\pmb{U}}
\newcommand{\pV}{\pmb{V}}
\newcommand{\pZ}{\pmb{Z}}

\newcommand{\pP}{\pmb{P}}
\newcommand{\pQ}{\pmb{Q}}

\newcommand{\pSig}{\pmb{\Sigma}}
\newcommand{\pLam}{\pmb{\Lambda}}

\newcommand{\pmu}{\pmb{\mu}}
\newcommand{\pbeta}{\pmb{\beta}}

\newcommand{\mbR}{\mathbb{R}}
\newcommand{\mbC}{\mathbb{C}}
\newcommand{\mbE}{\mathbb{E}}

\newcommand{\tp}{\texttt{T}}

\newcommand{\diag}{\text{diag}}

\DeclareMathOperator*{\rank}{rank}
\DeclareMathOperator*{\supp}{supp}
\DeclareMathOperator*{\tr}{tr}

\begin{document}

\title{Double Descent for Random Fourier Series Models}%\footnote[0]{This work was  supported in part by the NSFC under grant numbers 12371101, 12401124 and the Natural Science Foundation of Zhejiang Province under grant numbers LQN25A010002.}}

%%=============================================================%%
%% GivenName	-> \fnm{Joergen W.}
%% Particle	-> \spfx{van der} -> surname prefix
%% FamilyName	-> \sur{Ploeg}
%% Suffix	-> \sfx{IV}
%% \author*[1,2]{\fnm{Joergen W.} \spfx{van der} \sur{Ploeg} 
%%  \sfx{IV}}\email{iauthor@gmail.com}
%%=============================================================%%

\author[1]{\fnm{Hang} \sur{Xu}}\email{hangxu@zstu.edu.cn}

\author*[2]{\fnm{Yi} \sur{Shen}}\email{yshen@zstu.edu.cn}
%\equalcont{These authors contributed equally to this work.}
\author[3]{\fnm{Yuzhong} \sur{Zhao}}\email{2024210103042@zstu.edu.cn}

\affil[1,2,3]{\orgdiv{Department of Mathematics}, \orgname{Zhejiang Sci-Tech University}, \orgaddress{\city{Hangzhou}, \postcode{310018}, \country{P. R. China}}}

%\affil[2]{\orgdiv{Department of Mathematics}, \orgname{Zhejiang Sci-Tech University}, \orgaddress{\city{Hangzhou}, \postcode{310018}, \country{P. R. China}}}

%%==================================%%
%% Sample for unstructured abstract %%
%%==================================%%

\abstract{We investigate the least squares linear regression problem with random partial Discrete Fourier Transform (DFT) matrices, providing a rigorous analysis of the model's generalization error. 
By leveraging tools from random matrix theory, we derive exact non-asymptotic bounds for the risk of the Moore-Penrose estimator, 
which hold for finite-dimensional problems and reveal the precise dependence on key parameters such as the sample size, dimension, and noise variance. Then we obtain a characterization of the double descent phenomenon in the linear regression context, demonstrating how the risk evolves 
when the number of parameters $p$ and the number of samples $n$ tend to infinity,
with $p/n$ fixed.
The analysis relies on applications of the Stieltjes transform for random Fourier matrices, 
enabling a precise description of the spectral properties of these matrices and their impact on regression performance. 
To validate our theoretical findings, we present several numerical examples that illustrate the double descent curves. These simulations align closely with our derived bounds, confirming their predictive power in both under-parameterized and over-parameterized regimes. }

\keywords{least squares regression, double descent, Fourier series model}
%%\pacs[JEL Classification]{D8, H51}
%\pacs[MSC Classification]{}

\maketitle

\section{Introduction}
The double descent phenomenon characterizes the evolution of a model’s test risk  as its complexity increases \cite{Belkin2019}. Unlike the traditional understanding—where test error typically follows a U-shaped trend (dropping with rising complexity to a minimum, then rising due to overfitting) \cite{geman1992neural,hastie2009elements}—this pattern shifts in many modern machine learning scenarios. Specifically, in models like linear regressions, kernel methods, and neural networks, test error instead traces a double descent curve: it first decreases, then rises temporarily, and finally decreases again as complexity continues to grow. This counterintuitive behavior has been the focus of extensive research, as detailed in
\cite{Bach2024,bartlett2020benign,Belkin2019,Belkin2020,breiman1983many,Derezinski2020,geiger2020scaling,2022SURPRISES,kuzborskij2021role,2021song,muthukumar2020harmless,rahimi2007random,Nakkiran_2021,tulino2004random} 
 and many references therein.

Several studies, including those by Bartlett et al. \cite{bartlett2020benign}, Belkin et al. \cite{Belkin2019,Belkin2020}, 
Muthukumar et al. \cite{muthukumar2020harmless}, Hastie et al. \cite{2022SURPRISES}
have investigated the double descent phenomenon  of least-squares predictors in  linear regression.  
In these studies, certain probabilistic  models with $p$ parameters are assumed to be trained on a data set of size $n$.
Then the generalization error  is estimated in the limit $p,n \to \infty$ with $p/n$ fixed.
For the classical Gaussian model \cite{breiman1983many}, Breiman and Freedman  derived the risk in the under-parameterized regime $p \leq n$. 
Belkin et al.  \cite{Belkin2019} qualitatively demonstrated that the double descent curve extends the textbook U-shaped bias–variance trade-off by revealing that increasing model capacity beyond the interpolation threshold can further improve performance.
Belkin et al. \cite{Belkin2020}  provided a detailed mathematical analysis for the curves
in two simple random features models. 
One is a Gaussian model and the other is a Fourier series model which can be regarded as a one-dimensional version of the random Fourier features model in
\cite{rahimi2007random} for functions defined on the unit circle.
For the Fourier series model,  Belkin et al. \cite{Belkin2020} proved that the risk is infinite around $p = n$ and decreases again as $p$ increases beyond $n$, i.e., $p>n$. 

Grounded in the investigative framework delineated in \cite{Belkin2020},  we derive comprehensive non-asymptotic and asymptotic risk formulae for regression models with random partial Discrete Fourier Transform (DFT) matrices on noisy data.
Notably, we present, for the first time,
the risk bound in the case where 
$p<n$, and  extend  the case of $p\geq n$ as discussed in \cite{Belkin2020} to incorporate noisy observations,
thereby broadening the scope of its applicability. In our analysis, we introduce the truncated discrete $\eta$-transform and Stieltjes transform, both of which play  pivotal roles in addressing the challenges inherent to the $p < n$ regime. We anticipate that these analytical tools will prove invaluable for the asymptotic investigation of the double descent phenomenon in other modeling contexts as well. 

\subsection{Related Work}
Random Fourier features  and related random feature models have been extensively studied in machine learning and approximation theory. Existing works mainly focus on kernel approximation, statistical learning guarantees, and computational scalability.
\\
\indent Early works such as Rahimi and Recht in \cite{rahimi2008uniform} introduced random nonlinear feature expansions for approximating functions in reproducing kernel Hilbert spaces and established approximation guarantees for randomized feature models. Subsequently, a large body of literature studied the approximation quality and statistical properties of random Fourier features in kernel methods. In particular, Avron et. al in \cite{avron2017random} analyzed spectral approximation guarantees for kernel ridge regression with random Fourier features, while Rudi and Rosasco in \cite{rudi2017generalization} established generalization bounds and statistical-computational trade-offs for learning with random features. Moreover, Sriperumbudur and Szab{\'o} in \cite{sriperumbudur2015optimal} derived finite-sample approximation guarantees and optimal convergence rates for random Fourier feature approximations.
More recently, Neufeld and Schmocker in \cite{neufeld2023universal} developed a Banach-space-valued framework for random feature models and proved universal approximation properties and approximation rates in a broad functional-analytic setting, including random Fourier regression and random neural networks.
\\
\indent Several works have investigated interpolation phenomena and double descent behavior in Fourier-based and random feature models. 
Li in \cite{li2021generalization} studied minimum weighted norm interpolation from an approximation-theoretic perspective and derived generalization error estimates for weighted interpolation schemes. 
Xie et. al in \cite{xie2022overparameterization} analyzed weighted trigonometric interpolation in over-parameterized regimes and investigated the corresponding generalization behavior, including numerical experiments exhibiting interpolation-related phenomena. 
More recently, Chen and Schaeffer in \cite{chen2024conditioning} studied the conditioning and generalization properties of random Fourier feature matrices and observed double descent behavior in random feature models.
\\
\indent Rather than finite-sample generalization bounds, studying weighted interpolation schemes, kernel approximation, or conditioning bounds, 
we give nonasymptotic analysis, and 
investigate the precise asymptotic reconstruction risk of random partial Fourier regression matrices in the proportional high-dimensional regime
with $D,n,p\to\infty$, and $n/D$, $p/D$ fixed.
In particular, we derive explicit asymptotic risk formulas in both the under-parameterized and over-parameterized regimes, characterize the singular behavior at the interpolation threshold, and analyze the dependence of the optimal risk on the sampling ratios and noise level.

\subsection{Notation}
We introduce notation which will be used throughout the paper. 
Let $\mathbb{R}$ be the set of all real numbers and $\mathbb{C}$ be the set of all complex numbers where 
$  \mathrm{i} = \sqrt{-1}$
denotes the imaginary unit.
Scalars, column vectors and matrices 
are denoted by
small letters,
small bold letters 
and
capital bold letters, respectively, 
e.g., $z$, $\pmb{z} $ and $\pZ$.
The $i$-th entry of a vector $\pmb{z}$  
is denoted by  
$z_i$  
and  
the $(i, j)$-th element of a matrix $\pmb{Z}$
 is denoted by $z_{ij}$.
The superscript $\tp$ denotes the transpose of a vector or a matrix. 
 For a vector $\pmb{z}=(z_1,\ldots,z_p)^\tp \in \mbC^{p}$, let $\Vert \pmb{z}\Vert$ denote its Euclidean norm, i.e., $\Vert \pmb{z}\Vert = \sqrt{\sum_{i}\vert z_{i}\vert^2}$.
For a given matrix $\pZ \in \mbC^{n_1\times n_2}$, we use $\pZ^*$ to denote its conjugate transpose, $\pZ^\dagger$ to denote its Moore-Penrose  inverse and
$\tr(\pZ)$ to denote its trace. Let 
$\Vert \pmb{Z}\Vert_F$ denote the Frobenius norm of $\pZ$, i.e., $\Vert \pmb{Z}\Vert_F = \sqrt{\sum_{ij}\vert z_{ij}\vert^2}$. 
If  the square matrix $\pZ$ is invertible, then
we use $\pZ^{-1}$ to denote its inverse.
For a rank-$r$ diagonal matrix $\pSig = \diag(\sigma_1, \sigma_2, \dots, \sigma_r,0,\dots,0) \in \mbR^{n\times n}$, we define {the} generalized  {inverse} $\pSig^{\rm inv} = \diag(\sigma_1^{-1}, \sigma_2^{-1}, \dots, \sigma_r^{-1}, 0,\dots,0)$.
If  $\pU\pSig\pV^*$ is the singular value decomposition of  $n\times n$ square matrix $\pZ$ where $\pSig \in \mathbb R^{n\times n}$ is the diagonal matrix and $\pU, \pV$ are the $n\times n$ unitary matrices, we define $\pZ^{\rm inv} = \pV\pSig^{\rm inv}\pU^*$.
For two matrices $\pU, \pV$ of the same dimension, we use $\langle \pmb{U}, \pmb{V}\rangle$ to denote its matrix inner product, i.e., $\langle \pmb{U}, \pmb{V}\rangle = \sum_{i,j}\overline{u}_{i,j}v_{i,j} = \tr(\pmb{U}^*\pmb{V})$.
For any given positive integer $n$, we denote $[n]=\{ 1,2,\dots,n\}$. 
For a set $S \subset [n]$,  let $|S|$ denote the number of entries in $S$, and let $S^c$ denote its complement, i.e., $S^c = [n]\setminus S$.
For a matrix $\pZ \in \mbC^{n\times n}$, we denote $\pZ_{S,T} \in \mbC^{|S| \times |T|}$ as the submatrix of $\pZ$ with rows from $S$ and columns from $T$, where $S, T \subset [n]$ are the index sets. 
We also let $\pZ_{\cdot,T} \in \mbC^{n\times |T|}$  be the submatrix of $\pZ$ with columns from $T$ and $\pZ_{S,\cdot} \in \mbC^{|S|\times n}$  be the submatrix of $\pZ$ with rows from $S$.
For a vector $\pmb{z}$, we let $\pmb{z}_S$ be  the subvector of $\pmb{z}$ of entries from $S$. 
 {For two non-negative real sequences $\{a_t\}_t$ and $\{b_t\}_t$,
we write $b_t = O(a_t)$  if $b_t \leq C a_t$.} 
We use $a\propto b$ to denote that $a$ is proportional to $b$, i.e., there exists a positive constant $C$ such that $a=Cb$.

\subsection{Organization}
The remainder of this paper is structured as follows. 
In Section \ref{sec:2}, we formulate the mathematical expressions for the observation models and the associated linear regression problem. Then we present our core results on random Fourier series models. 
Section \ref{sec:3} introduces a key analytical tool: the truncated spectral distribution. 
The proofs of our main results are provided in Section \ref{sec:4}. 
Section \ref{sec:6} reports numerical experiments conducted to validate the derived risk formula, thereby offering additional empirical evidence for our theoretical findings. Finally, Section \ref{sec:7} concludes the paper.

\section{Main Results}\label{sec:2}
\subsection{Problem Set-up}\label{setup}
Let $\pF \in \mbC^{D\times D}$ denote the  DFT matrix whose $(i,j)$-entry is 
\begin{align*}  
F_{i,j} = \frac{1}{\sqrt{D}}\omega^{(i-1)(j-1)},
\quad 
i = 1, 2, \ldots, D,\  
j = 1, 2, \ldots, D,
\end{align*}
where 
$\omega = \exp(-2 \pi \mathrm{i} /D)$
is a primitive root of unity. 
For some $\pbeta \in \mbC^{D}$, 
we have its
noisy discrete Fourier coefficients
\begin{align}\label{eq:lm}
\pmu := \pF\pbeta + \pe
\end{align}
where the noise term $\pe \in \mbC^{D}$ satisfies  {$\text{Re}(\pe),\, \text{Im}(\pe) \sim \mathcal{N}\!\left(0,\frac{\sigma^2}{2D}\pI_D\right)$ independently. }

 Let $S$ and $T$ be independent random subsets of $[D]$ with $\mbE |S|=n$ and $\mbE |T|=p$. Here, $\pmb{\beta}$ is independent of $S$, $T$, and $\pmb{e}$. For any $i \in  [D]$, the membership of $i$ in $S$ (respectively, $T$) is determined by an independent Bernoulli variable with mean $\rho_n := n/D$ (respectively, $\rho_p := p/D$).
Suppose that we observe the $|S| \times |T|$ design matrix $\pF_{S,T}$ and $|S|$-dimensional vector of responses $\pmu_S$.
The regression coefficients  $\hat{\pbeta} = (\hat{\beta}_1, \dots , \hat{\beta}_D)$ are fitted by
\begin{align*}
\hat{\pbeta}_T := \pF_{S,T}^\dagger\pmu_S,\quad \hat{\pbeta}_{T^c} :=\pmb{0}.
\end{align*}

 The issue is that $\pF_{S,T}$ does not always have the full spark property.
Recall that a matrix is full spark if every square submatrix is nonsingular.
For $\pF_{S,T}$, the full spark property depends on the arithmetic structure of $D$. 
Therefore, the statement
\[
\operatorname{rank}(\pF_{S,T}) = \min\{|S|,|T|\}
\qquad
\text{for all } S,T\subseteq[D],
\]
does not hold in general for arbitrary $D$. A simple counterexample occurs when $D=4$.
In this case,
\(
\omega=e^{-\pi \mathrm{i}/2}=-\mathrm{i},
\)
and the DFT matrix is
\[
\pF
=
\frac12
\begin{pmatrix}
1&1&1&1\\
1&-\mathrm{i}&-1&\mathrm{i}\\
1&-1&1&-1\\
1&\mathrm{i}&-1&-\mathrm{i}
\end{pmatrix}.
\]
Choose $S=\{1,3\}$, $T=\{1,3\}$. 
Then
\[
\pF_{S,T}=\frac{1}{2}
\begin{pmatrix}
1&1\\
1&1
\end{pmatrix}
\]
has rank $1$ rather than $2$.
 {With high probability, the submatrix $\pF_{S,T}$ has full rank:
\[
\operatorname{rank}(\pF_{S,T}) = \min\{|S|,|T|\}.
\]} 
 Thus, regardless of whether $\pF_{S,T}$ has full rank, we have
\begin{align*}
\pF_{S,T}^\dagger= \left\{
\begin{array}{lr}
\pF^*_{S,T}(\pF_{S,T}\pF_{S,T}^*)^{\rm inv}, &|S|\leq |T|,\\
(\pF^*_{S,T}\pF_{S,T})^{\rm inv}\pF^*_{S,T}, &|S| > |T|.
\end{array}
\right.
\end{align*}
The advantage of introducing the generalized inverse $(\cdot)^{\rm inv}$ is that the corresponding expressions remain well-defined even when the matrices
\[
\pF_{S,T}\pF_{S,T}^* \quad\text{or}\quad \pF_{S,T}^*\pF_{S,T}
\]
are singular. In particular, this formulation avoids the need to assume that
\[
\rank(\pF_{S,T})=\min\{|S|,|T|\}
\] 
holds for all realizations of $S$ and $T$. Even if rank deficiency occurs, then the Moore--Penrose pseudoinverse remains well-defined, and all matrix identities involving the pseudoinverse and spectral decomposition continue to hold.

We estimate the risk of $\hat{\pbeta}$ under a random model for $\pbeta$, where
\begin{align}\label{eq:beta}
\mbE[\pbeta\pbeta^*] = \frac{1}{D}\cdot \pI_D,
\end{align}
which implies $\mbE[\| \pbeta \|^2] = 1$. 
For any given index set $T \subset [D]$,
it follows from \eqref{eq:beta} that
\begin{align}\label{eq1}
\mbE[\pbeta_T\pbeta^*_T] = \frac{1}{D}\cdot \pI_{|T|},\quad \mbE[\pbeta_{T^c}\pbeta^*_{T^c}] = \frac{1}{D}\cdot \pI_{D-|T|}.
\end{align}

\subsection{Non-asymptotic Analysis}
We first obtain a non-asymptotic estimate of the risk of $\hat{\pbeta}$  as follows.
\begin{theorem}\label{thm2}
Assume the setting is given in~Problem Set-up (Section~\ref{setup}). 
Then, the following
 \begin{align}\label{points}
\mathbb{E}_{\pe,\pbeta}[\|\pbeta - \hat{\pbeta}\|^2 \mid S,T]
&= 1 -   \frac{2\rank(\pF_{S,T})}{D}  + \frac{1 + \sigma^2}{D} 
\sum_{i=1}^{|S|} \frac{1}{1-\lambda_i}\mathbf{1}_{\lambda_i<1}
\end{align}
holds, where  {$\{ \lambda_i \}_{i=1}^{|S|}$ are the eigenvalues of the matrix $\pmb{F}_{S,T^c}\pmb{F}^*_{S,T^c}$.}
\end{theorem}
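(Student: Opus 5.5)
The plan is to decompose the error along the coordinate split $T \cup T^c$, take expectations over $\pe$ and $\pbeta$ conditionally on $S,T$, and then use the orthonormality of the rows of $\pF$ to collapse everything into one trace, $\tr\big((\pF_{S,T}\pF_{S,T}^*)^{\rm inv}\big)$.

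\textbf{Setup.} Write $\pA := \pF_{S,T}$ and $\pmb{B} := \pF_{S,T^c}$. Then $\pmu_S = \pA\pbeta_T + \pmb{B}\pbeta_{T^c} + \pe_S$, so $\hat\pbeta_T = \pA^\dagger\pA\pbeta_T + \pA^\dagger\pmb{B}\pbeta_{T^c} + \pA^\dagger\pe_S$ and $\hat\pbeta_{T^c}=\pmb 0$. Hence
\[
\|\pbeta-\hat\pbeta\|^2 = \|\pbeta_{T^c}\|^2 + \big\|(\pI-\pA^\dagger\pA)\pbeta_T - \pA^\dagger\pmb{B}\pbeta_{T^c} - \pA^\dagger\pe_S\big\|^2 .
\]

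\textbf{Conditional expectation.} Condition on $S,T$ and expand the square.
\begin{itemize}
\item Cross terms involving $\pe_S$ vanish, since $\pe$ has mean zero and is independent of $\pbeta$.
\item Cross terms between $\pbeta_T$ and $\pbeta_{T^c}$ vanish, because \eqref{eq:beta} makes the off-diagonal block of $\mbE[\pbeta\pbeta^*]$ zero.
\end{itemize}
Using \eqref{eq1}, $\mbE[\pe_S\pe_S^*]=\frac{\sigma^2}{D}\pI_{|S|}$, and the fact that $\pI-\pA^\dagger\pA$ is an orthogonal projector of rank $|T|-\rank\pA$, this gives
\[
\frac{D-|T|}{D} + \frac{|T|-\rank \pA}{D} + \frac{1}{D}\tr\!\big(\pA^\dagger(\pmb{B}\pmb{B}^*+\sigma^2\pI)\pA^{\dagger *}\big).
\]

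\textbf{Using the DFT structure.} The rows of $\pF$ are orthonormal, and $T\cup T^c=[D]$, so $\pA\pA^*+\pmb{B}\pmb{B}^*=\pI_{|S|}$. Substituting $\pmb{B}\pmb{B}^* = \pI-\pA\pA^*$ turns the trace into
\[
(1+\sigma^2)\tr(\pA^\dagger\pA^{\dagger*}) - \tr(\pA^\dagger\pA\pA^*\pA^{\dagger*}).
\]
Take an SVD $\pA=\pU\pSig\pV^*$; this is where possible rank deficiency must be handled carefully. From it:
\begin{itemize}
\item $\tr(\pA^\dagger\pA\pA^*\pA^{\dagger*}) = \tr(\pA^\dagger\pA)=\rank\pA$;
\item $\tr(\pA^\dagger\pA^{\dagger*}) = \sum_{s_i>0}s_i^{-2} = \tr\big((\pA\pA^*)^{\rm inv}\big)$.
\end{itemize}
Collecting terms gives
\[
1-\frac{2\rank\pA}{D}+\frac{1+\sigma^2}{D}\tr\big((\pA\pA^*)^{\rm inv}\big).
\]

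\textbf{Identifying the eigenvalues.} Since $\pA\pA^* = \pI-\pmb{B}\pmb{B}^*$ and $0\preceq \pmb{B}\pmb{B}^*\preceq \pI$, the eigenvalues of $\pA\pA^*$ are $1-\lambda_i\in[0,1]$. The nonzero ones are exactly those with $\lambda_i<1$. Therefore $\tr\big((\pA\pA^*)^{\rm inv}\big)=\sum_i (1-\lambda_i)^{-1}\mathbf 1_{\lambda_i<1}$, which is \eqref{points}.

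\textbf{Main obstacle.} I expect the only delicate step to be justifying the pseudoinverse trace identities when $\pA$ is rank deficient, for either ordering of $|S|$ and $|T|$. Working through the SVD, rather than the two case formulas for $\pA^\dagger$, handles both uniformly. The rest is routine bookkeeping of second moments.
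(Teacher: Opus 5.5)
Your proposal is correct and follows the same route as the paper: split the error over $T\cup T^c$, take second moments conditionally on $S,T$, and use $\pF_{S,T}\pF_{S,T}^*+\pF_{S,T^c}\pF_{S,T^c}^*=\pI_{|S|}$ to reduce everything to $\rank(\pF_{S,T})$ and $\tr\big((\pF_{S,T}\pF_{S,T}^*)^{\rm inv}\big)$. The differences are only bookkeeping. You kill the $\pbeta_T$--$\pbeta_{T^c}$ cross term in expectation, whereas the paper kills it deterministically by an orthogonality argument. You also substitute $\pmb{B}\pmb{B}^*=\pI-\pA\pA^*$ inside the trace to get $\tr(\pA^\dagger\pA^{\dagger*})-\rank\pA$ in one step, which avoids the paper's case split on $|S|$ versus $|T|$ and its detour through $\sum_i\lambda_i/(1-\lambda_i)$ and $\tilde r=|S|-\rank(\pF_{S,T})$.
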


 The proof of  Theorem \ref{thm2} is provided in Section \ref{proof_thm2}.  

\subsection{Asymptotic Analysis}

We then derive an analytical expression for the 
risk of $\hat{\pbeta}$ 
in the context of asymptotic analysis. 
\begin{theorem}\label{thm1}
Assume the setting is given in~Problem Set-up (Section~\ref{setup}). Let $D,n,p \to \infty$ with $\rho_n$ and $\rho_p$ fixed. 
Then, the following
\begin{align*}
\mathbb{E}_{\pe,\pbeta}[\|\pbeta - \hat{\pbeta}\|^2 \mid S,T] \xrightarrow{a.s.}
\left\{
\begin{array}{lr}
 { 1 - 2\rho_n + (1+\sigma^2) \cdot \frac{\rho_n(1-\rho_n)}{\rho_p-\rho_n}}, &\rho_p> \rho_n,\\
+\infty, &\rho_p=\rho_n, \\
 { 1-2\rho_p + (1+\sigma^2) \cdot \frac{\rho_p(1-\rho_p)}{\rho_n-\rho_p}}, &\rho_p<\rho_n,
\end{array}
\right.
\end{align*}
holds.
\end{theorem}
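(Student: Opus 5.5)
Starting from Theorem~\ref{thm2}, it suffices to find the almost sure limits of $\rank(\pF_{S,T})/D$ and of
\[
\frac1D\sum_{i=1}^{|S|}\frac{\mathbf 1_{\lambda_i<1}}{1-\lambda_i},
\]
where $\lambda_i$ are the eigenvalues of $\pF_{S,T^c}\pF_{S,T^c}^*$.

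\textbf{Step 1: reduce to the spectrum of $\pF_{S,T}\pF_{S,T}^*$.} Because $\pF$ is unitary, the rows of $\pF_{S,\cdot}$ are orthonormal, so
\[
\pF_{S,T}\pF_{S,T}^*+\pF_{S,T^c}\pF_{S,T^c}^*=\pI_{|S|}.
\]
Hence $1-\lambda_i=\mu_i$, where $\mu_i$ are the eigenvalues of $\pF_{S,T}\pF_{S,T}^*$. The sum becomes $\frac1D\sum_i \mu_i^{-1}\mathbf 1_{\mu_i>0}$, and $\rank(\pF_{S,T})=\#\{i:\mu_i>0\}$. Everything is therefore a functional of the empirical spectral distribution of the compressed projection $\pP_S\pP_T\pP_S$, with $\pP_T=\pF\,\diag(\mathbf 1_T)\pF^*$ and $\pP_S=\diag(\mathbf 1_S)$.

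\textbf{Step 2: identify the limiting spectrum.} $\pP_S$ and $\pP_T$ are random projections in the Fourier and standard bases. Because the coordinate subsets are Bernoulli, they are asymptotically free; this is the classical result of Farrell / Tulino--Caire--Shamai--Verdú for random partial DFT matrices. So the limiting law of $\pP_S\pP_T\pP_S$ is the free multiplicative convolution of two Bernoulli laws with masses $\rho_n,\rho_p$. This is Wachter's (Jacobi/MANOVA) law, with explicit Stieltjes transform. It has:
\begin{itemize}
\item an atom at $0$ of mass $1-\min(\rho_n,\rho_p)$ (in $D$-normalization, counting $S$-coordinates as $\rho_n-\min$ plus the complement);
\item an atom at $1$ of mass $\max(0,\rho_n+\rho_p-1)$;
\item a continuous part supported on $[a_-,a_+]$, where $a_\pm=\bigl(\sqrt{\rho_n(1-\rho_p)}\pm\sqrt{\rho_p(1-\rho_n)}\bigr)^2$.
\end{itemize}
Almost sure convergence of the empirical spectral distribution then gives $\rank(\pF_{S,T})/D\to\min(\rho_n,\rho_p)$, provided no eigenvalues accumulate near $0$ beyond the atom. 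Combined with the $-2\rank/D$ term of Theorem~\ref{thm2}, this yields $1-2\rho_n$ or $1-2\rho_p$.

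\textbf{Step 3: compute the inverse moment.} Evaluate $\int_{(0,1]} x^{-1}\,d\nu(x)$ by setting $z\to 0$ in the truncated Stieltjes transform of the nonzero part. Computing from the quadratic equation for the Jacobi Stieltjes transform should give
\[
\frac{\rho_n(1-\rho_n)}{\rho_p-\rho_n}\quad\text{when }\rho_p>\rho_n .
\]
As a sanity check, at $\rho_p=1$ this is $\rho_n$, matching $|S|/D$ eigenvalues all equal to $1$. The case $\rho_p<\rho_n$ follows by the symmetry $\pF_{S,T}^*\pF_{S,T}$ (swap the roles of $n$ and $p$). When $\rho_p=\rho_n$ we have $a_-=0$ and the density behaves like $x^{-1/2}$ near $0$, so the integral diverges. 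Fatou's lemma applied to the truncated test functions $\min(x^{-1},M)\mathbf 1_{x>0}$ then gives $+\infty$.

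\textbf{Main obstacle.} The function $x^{-1}$ is unbounded, so weak convergence alone does not control the sum. Eigenvalues of the finite matrix lying in $(0,\epsilon)$ could contribute non-negligibly, and the spectral gap is not automatic for DFT submatrices; the $D=4$ example shows rank deficiency can occur. My first attempt would be to show almost surely that the smallest nonzero eigenvalue stays above $a_-/2$ when $\rho_n\neq\rho_p$. I would try a no-outliers argument via moment or trace methods of high order, exploiting the Bernoulli randomness. Failing that, I would prove the number of eigenvalues in $(0,\epsilon)$ is $o(D)$ and bound their contribution through the truncated transform. This is presumably why the paper introduces truncated $\eta$/Stieltjes transforms in Section~\ref{sec:3}.
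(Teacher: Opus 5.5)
Your outline reaches the same limits as the paper, and in one respect by a cleaner route. The paper works with $\pF_{S,T^c}\pF_{S,T^c}^*$. It feeds Farrell's closed-form $\eta$-transform (Lemma~\ref{lem2}) into the truncated Stieltjes transform of Proposition~\ref{prop3p3_trun}, then evaluates $-\lim_{z\to1}\tilde s_\lambda(z)$ by L'H\^opital. In Case~1 this gives $(1-\rho_n)/(\rho_p-\rho_n)$ per $|S|$, which is exactly your predicted $\rho_n(1-\rho_n)/(\rho_p-\rho_n)$ per $D$, so your ``should give'' is confirmed.

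Working with $\pF_{S,T}\pF_{S,T}^*$ and the Wachter law is equivalent, since $\mu_i=1-\lambda_i$. It has the advantage of making the blow-up at $\rho_p=\rho_n$ explicit ($a_-=0$, density $\sim x^{-1/2}$), which the paper only asserts.

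The substantive difference is the under-parameterized case. Since $\pF$ is symmetric, $\pF_{T,S}=\pF_{S,T}^{\tp}$. The right-hand side of Theorem~\ref{thm2} depends only on $\rank(\pF_{S,T})$ and $\|\pF_{S,T}^\dagger\|_F^2$, so it is exactly invariant under $(S,T)\mapsto(T,S)$ at every finite $D$. Hence $\rho_p<\rho_n$ is Case~1 with $\rho_n,\rho_p$ interchanged. This makes the paper's truncated transform and its second L'H\^opital computation in Case~3 unnecessary; the paper's Case~3 answer is precisely that swap.

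The step you leave open is a genuine gap in your outline, but the paper does not close it either. After noting that the limiting support avoids $1$, the paper concludes $\frac{1}{|S|}\sum_i(1-\lambda_i)^{-1}\mathbf 1_{\lambda_i<1}\to-\lim_{z\to1}\tilde s_\lambda(z)$ directly from almost-sure weak convergence. That interchanges $\lim_D$ with the edge limit $z\to1$ for an unbounded test function, which is exactly what needs proof.

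The truncated transforms are not the fix you suspect. They discard only \emph{exact} eigenvalues $\lambda_i=1$ (forced by dimension when $|S|>|T|$), not nonzero $\mu_i$ near $0$.

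Your fallback would not suffice either. That only $o(D)$ nonzero $\mu_i$ lie in $(0,\epsilon)$ for $\epsilon<a_-$ already follows from weak convergence together with $\rank(\pF_{S,T})\le\min(|S|,|T|)$. Yet a single nonzero $\mu_i$ of order $1/D$ shifts the limit, and one of size $o(1/D)$ makes it infinite.

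What is needed is uniform integrability: $\lim_{\delta\to0}\limsup_D\frac1D\sum_{0<\mu_i<\delta}\mu_i^{-1}=0$ almost surely. The most natural route is a no-outlier statement for $\rho_p>\rho_n$, e.g.\ almost-sure convergence of $\|\pF_{S,T^c}\|^2$ to the Wachter edge $1-a_-<1$. By your symmetry, the same statement also covers $\rho_p<\rho_n$.

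Weak convergence alone gives only the lower bound, and that is all the case $\rho_p=\rho_n$ needs. There, though, $\min(x^{-1},M)\mathbf 1_{x>0}$ is discontinuous at $0$, where your $D$-normalized limit has an atom of mass $1-\rho_n$. Use a continuous minorant vanishing at $0$, or work with the $|S|\times|S|$ matrix, whose limit has no atom at $0$ when $\rho_n=\rho_p$.

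Finally, the rank limit needs no proviso about accumulation near $0$. The portmanteau bound on $[0,\epsilon]$ plus the dimension count suffices, which is essentially the content of Proposition~\ref{prop:rank_limit}.
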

The proof of   Theorem \ref{thm1} can be found in Section \ref{proof_thm1}. 
A natural corollary to   {Theorem \ref{thm1}} is the noiseless case.
\begin{corollary}\label{cor1}
Assume the setting is given in~Problem Set-up (Section~\ref{setup}). Let $D,n,p \to \infty$ with $\rho_n$ and $\rho_p$ fixed. 
In the noiseless Fourier series model with $\pe = \pmb{0}$ in \eqref{eq:lm}, 
the following
\begin{align*}
\mathbb{E}_{\pbeta}[\|\pbeta - \hat{\pbeta}\|^2 \mid S,T] \xrightarrow{a.s.} \left\{
\begin{array}{lr}
 {1 - 2\rho_n + \frac{\rho_n(1-\rho_n)}{\rho_p-\rho_n}}, &\rho_p> \rho_n,\\
+\infty, &\rho_p=\rho_n, \\
 {1-2\rho_p +  \frac{\rho_p(1-\rho_p)}{\rho_n-\rho_p}}, &\rho_p<\rho_n,
\end{array}
\right.
\end{align*}
holds.
\end{corollary}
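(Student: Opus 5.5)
Corollary \ref{cor1} is the special case $\sigma^2 = 0$ of Theorem \ref{thm1}, and we plan to obtain it that way. With $\pe = \pmb{0}$, the noise is degenerate: $\mathrm{Re}(\pe)$ and $\mathrm{Im}(\pe)$ are $\mathcal{N}(0,\frac{\sigma^2}{2D}\pI_D)$ with $\sigma = 0$. All steps leading to Theorem \ref{thm2} use only $\mbE[\pe]=\pmb{0}$, $\mbE[\pe\pe^*] = \frac{\sigma^2}{D}\pI_D$, and the independence of $\pe$ from $\pbeta, S, T$. These properties hold trivially when $\sigma=0$.

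First, we will check that the non-asymptotic identity \eqref{points} specializes correctly. Write $\pbeta - \hat{\pbeta}$ as
\[
\bigl(\pbeta_T - \pF_{S,T}^\dagger\pF_{S,\cdot}\pbeta - \pF_{S,T}^\dagger\pe_S,\ \pbeta_{T^c}\bigr).
\]
In the expansion of $\mbE_{\pe,\pbeta}\|\pbeta-\hat{\pbeta}\|^2$, the cross terms between $\pe$ and $\pbeta$ vanish. The noise term contributes
\[
\frac{\sigma^2}{D}\tr\bigl((\pF_{S,T}\pF_{S,T}^*)^{\rm inv}\bigr),
\]
which is exactly the $\sigma^2$ part of the last sum in \eqref{points}. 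Setting $\sigma=0$ kills this term and leaves
\[
\mbE_{\pbeta}[\|\pbeta-\hat{\pbeta}\|^2\mid S,T] = 1 - \frac{2\rank(\pF_{S,T})}{D} + \frac1D\sum_{i}\frac{1}{1-\lambda_i}\mathbf 1_{\lambda_i<1}.
\]
Here the left side no longer depends on $\pe$.

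Next, we invoke the asymptotic part of the proof of Theorem \ref{thm1}. It shows that, almost surely, $\rank(\pF_{S,T})/D \to \min\{\rho_n,\rho_p\}$. It also shows that
\[
\frac1D\sum_i (1-\lambda_i)^{-1}\mathbf 1_{\lambda_i<1}
\]
converges to $\rho_n(1-\rho_n)/(\rho_p-\rho_n)$ when $\rho_p>\rho_n$, to $\rho_p(1-\rho_p)/(\rho_n-\rho_p)$ when $\rho_p<\rho_n$, and diverges to $+\infty$ when $\rho_p=\rho_n$. These limits concern only the spectra of $\pF_{S,T^c}\pF_{S,T^c}^*$, which are independent of $\sigma$. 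Multiplying by $1+\sigma^2 = 1$ and adding $1-2\min\{\rho_n,\rho_p\}$ gives the three cases stated in the corollary. In the case $\rho_p=\rho_n$ the divergent term keeps its nonnegative coefficient $1$, so the limit is still $+\infty$.

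There is essentially no obstacle. The only point requiring care is confirming that the proof of Theorem \ref{thm1} never divides by $\sigma^2$ or uses $\sigma>0$; the structure of \eqref{points}, which is affine in $\sigma^2$, makes this clear. Since the convergence is almost sure term by term, the specialization is immediate.
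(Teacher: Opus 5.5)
Your proposal is correct and follows the paper's route: the paper presents Corollary~\ref{cor1} simply as the $\sigma=0$ specialization of Theorem~\ref{thm1}, which in turn rests on the non-asymptotic identity \eqref{points} of Theorem~\ref{thm2}, with no separate argument. Your added checks are exactly what makes the specialization legitimate: that $\sigma^2$ enters \eqref{points} only through the factor $1+\sigma^2$, that the spectral limits do not depend on $\sigma$, and that the divergent term at $\rho_p=\rho_n$ keeps the positive coefficient $1$.
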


\begin{remark}
In the over-parameterized regime $\rho_p>\rho_n$, the limit of the noiseless case when $D$ tends to infinity is exactly the same as Theorem 3.1 in \cite{Belkin2020}. And we firstly present the risk bound in the under-parameterized regime $\rho_p<\rho_n$. 
Combining the two regimes, we provide a rigorous theoretical proof of the degenerate double descent phenomenon. 
When considering a kind of prescient selection model studied by Breiman and Freedman \cite{breiman1983many}, we can see an actual double descent in Figure \ref{Fig16}.
\end{remark}

\begin{remark}
In the noiseless case, as shown in Corollary \ref{cor1}, the limiting risk depends only on the aspect ratios \(\rho_n\) and \(\rho_p\). 
In the noisy case (Theorem \ref{thm1}), the limiting risk additionally depends on the effective noise level \(\sigma^2\). Since the noise vector is normalized according to
\[
\text{Re}(\pe),\,\text{Im}(\pe) \sim \mathcal N\!\left(0, \frac{\sigma^2}{2D}\pI_D\right)
\]
independently, 
the total noise energy satisfies
\[
\mathbb{E}\|\pe\|^2=\sigma^2,
\]
which remains constant as \(D\to\infty\). Therefore, the asymptotic risk remains finite in the high-dimensional regime and depends only on the sampling ratios \(\rho_n,\rho_p\) and the normalized noise level \(\sigma^2\), without any additional dependence on the ambient dimension \(D\). 
This normalization is standard in high-dimensional statistics and random matrix theory, where both signal and noise energies remain balanced in the large-dimensional limit.
\end{remark}

\begin{remark}\label{remark2point6}%[Comparison of minimal risks across regimes]
%Let $\rho_n = n/D$ and $\rho_p = p/D$ be fixed. 
We compare the minimal risks in the over-parameterized regime ($\rho_p>\rho_n$) and the under-parameterized regime ($\rho_p < \rho_n$) based on the asymptotic expressions in Theorem~\ref{thm1} and Corollary~\ref{cor1} when $\rho_n$ is fixed.\\
\textbf{Noiseless case ($\sigma=0$).}
In the over-parameterized regime, the risk
\[
R_{\mathrm{over}}(\rho_n,\rho_p) = 1 -2\rho_n + \frac{\rho_n(1-\rho_n)}{\rho_p-\rho_n}
\]
is strictly decreasing for $\rho_p \in (\rho_n,1]$, as %can be verified by direct differentiation. 
%Differentiating with respect to $\rho_p$ yields
\[
\frac{\partial R_{\mathrm{over}}(\rho_n,\rho_p)}{\partial \rho_p} = - \frac{\rho_n(1-\rho_n)}{(\rho_p-\rho_n)^2}<0.
\]
Hence, the minimum of $R_{\mathrm{over}}(\rho_n,\rho_p)$ is attained at $\rho_p = 1$, yielding
\[
R_{\mathrm{over}}^{\min} = 1 - \rho_n,
\]
where $R_{\mathrm{over}}^{\min}$ denotes the minimal value of $R_{\mathrm{over}}(\rho_n, \rho_p)$.
%In particular, $R_{\mathrm{over}}^{\min} \to 0$ as $\rho_n \to 1$. 
In contrast, in the under-parameterized regime,
\[
R_{\mathrm{under}}(\rho_n, \rho_p) = 1-2\rho_p + \frac{\rho_p(1-\rho_p)}{\rho_n-\rho_p}.
\]
$R_{\mathrm{under}}(\rho_n, \rho_p)$ is strictly convex in $\rho_p$, so it admits a unique minimizer characterized by
\[
\frac{\partial R_{\mathrm{under}}(\rho_n, \rho_p)}{\partial \rho_p}=0,  \quad \text{at} \quad \rho_p = \rho_n-\sqrt{\rho_n(1-\rho_n)} \quad \text{with } \rho_n>\frac{1}{2}.
\]
Thus we can get that the minimal value of $R_{\mathrm{under}}(\rho_n, \rho_p)$ is 
\[
R_{\mathrm{under}}^{\min} = 2\sqrt{\rho_n(1-\rho_n)}.
\]
If $\rho_n \leq 1/2$, we have
\[
R_{\mathrm{under}}^{\min} = 1.
\]
By a simple computation, we know that 
\[
R_{\mathrm{over}}^{\min} \leq R_{\mathrm{under}}^{\min}.
\]
That is, for a fixed $\rho_n$, the optimal risk in the over-parameterized regime is strictly smaller than that in the under-parameterized regime.
It explains how the risk in the over-parameterized regime can be lower than any risk in the under-parameterized regime in the noiseless case.\\
\textbf{Noisy case ($\sigma>0$).}
In the over-parameterized regime, the risk is
\[
R_{\mathrm{over}}(\rho_n,\rho_p)=1-2\rho_n+(1+\sigma^2)\frac{\rho_n(1-\rho_n)}{\rho_p-\rho_n},\qquad \rho_p\in(\rho_n,1].
\]
Differentiating with respect to $\rho_p$ yields
\[
\frac{\partial R_{\mathrm{over}}(\rho_n, \rho_p)}{\partial\rho_p}=-(1+\sigma^2)\frac{\rho_n(1-\rho_n)}{(\rho_p-\rho_n)^2}<0.
\]
Hence, $R_{\mathrm{over}}(\rho_n, \rho_p)$ is strictly decreasing on $(\rho_n,1]$.
It attains its minimum at $\rho_p = 1$, yielding that the minimal value of $R_{\mathrm{over}}(\rho_n, \rho_p)$ is 
\[
R_{\mathrm{over}}^{\min} = 1+(\sigma^2-1)\rho_n.
\]
In the under-parameterized regime,
\[
R_{\mathrm{under}}(\rho_n,\rho_p)=1-2\rho_p + (1+\sigma^2) \cdot \frac{\rho_p(1-\rho_p)}{\rho_n-\rho_p}, \qquad \rho_p\in(0,\rho_n).
\]
It gives
\[
\frac{\partial R_{\mathrm{under}}(\rho_n, \rho_p)}{\partial\rho_p}=\sigma^2-1+(1+\sigma^2)\frac{\rho_n(1-\rho_n)}{(\rho_n-\rho_p)^2}.
\]
Moreover,
\[
\frac{\partial^2R_{\mathrm{under}}(\rho_n, \rho_p) }{\partial\rho_p^2}=2(1+\sigma^2)\frac{\rho_n(1-\rho_n)}{(\rho_n-\rho_p)^3}>0.
\]
Thus, for a fixed $\rho_n$, $R_{\mathrm{under}}(\rho_n, \rho_p)$ is strictly convex with respect to $\rho_p$ on $(0,\rho_n)$.
We distinguish two cases:
\begin{itemize}
\item[(1).] If $\sigma \ge 1$, then $\frac{\partial R_{\mathrm{under}}(\rho_n, \rho_p)}{\partial\rho_p}>0$ 
for all $\rho_p\in(0,\rho_n)$.
Hence the risk is strictly increasing, and the minimum is attained at $\rho_p\to0$. 
Therefore,
\[
R_{\mathrm{under}}^{\min} =1,
\]
where $R_{\mathrm{under}}^{\min}$ denotes the minimal value of $R_{\mathrm{under}}(\rho_n, \rho_p)$.
\item[(2).] If $\sigma<1$, then there exists a unique interior critical point satisfying $\frac{\partial R_{\mathrm{under}}(\rho_n, \rho_p)}{\partial\rho_p}=0$. Solving the stationary equation yields
\[
\rho_p^* = \rho_n - \sqrt{ \frac{(1+\sigma^2)\rho_n(1-\rho_n)}{1-\sigma^2}}.
\]
Here, $\rho_p^*>0$ denotes $\rho_n > (1+\sigma^2)/2$.
Hence the minimum of $R_{\mathrm{under}}(\rho_n, \rho_p)$ is attained at the interior point on $(0,\rho_n)$. 
The corresponding minimum risk is
\begin{equation*}
R_{\mathrm{under}}^{\min}= \sigma^2(2\rho_n-1) +2 \sqrt{(1-\sigma^4)\rho_n(1-\rho_n)}. %=1+(1+\sigma^2)\rho_n-2\sqrt{(1+\sigma^2)\rho_n(1-\rho_n)\big(2-(1+\sigma^2)\rho_n\big)}.
\end{equation*}
\end{itemize}
By contrast, the optimal under-parameterized risk depends on both the noise level $\sigma^2$ and the fixed ratio $\rho_n$.
In particular, 
\begin{itemize}
\item the final comparison between regimes is governed by a phase transition in $(\sigma^2,\rho_n)$ space;
\item for sufficiently small noise levels, the over-parameterized regime typically achieves smaller risk;
\item for larger noise levels, the under-parameterized regime may become preferable.
\end{itemize}
%Therefore, in the normalized noisy setting, the optimal regime is determined jointly by the noise level and the aspect ratio.
Our finite-dimensional numerical experiments are broadly consistent with these theoretical predictions (Theorem \ref{thm1} and Corollary \ref{cor1}); see Figure~\ref{Fig10} for the noiseless case and Figure~\ref{Fig14} for the noisy case. 
\end{remark}

In the noiseless Fourier series model, we can derive two particular cases: $n=D$ and $p=D$.

\begin{corollary}\label{cor2}
Assume that $\pF_{S,\cdot} \in  \mbC^{|S|\times D}$ is 
the matrix obtained from $\pF$ by removing rows with indices not in $S$.
Each index is included in $S$ independently with probability $\rho_n$, $\mbE|S| = n$ and $\rho_n = \frac{n}{D} \in (0,1)$.
Let $\pmu := \pF\pbeta$ for some $\pbeta \in \mbR^{D}$.
We fit regression coefficients $\hat{\pbeta} = \pF_{S,\cdot}^\dagger \pmu_S \text{ with } \pF_{S,\cdot}^\dagger = \pF_{S,\cdot}^*(\pF_{S,\cdot}\pF_{S,\cdot}^*)^{-1}$.
Then, the following
\begin{align*}
\mathbb{E}_{\pbeta}[\|\pbeta - \hat{\pbeta}\|^2 \mid S,T] \xrightarrow{a.s.} 
1 - {\rho_n}
\end{align*}
holds when $\rho_n=n/D \in (0,1)$ is fixed. 

Similarly, assume that $\pF_{\cdot,T} \in  \mbC^{D\times |T|}$ is 
the matrix obtained from $\pF$ by removing columns with indices not in $T$.
Each index is included in $T$ independently with probability $\rho_p$, $\mbE|T| = p$ and $\rho_p = \frac{p}{D} \in (0,1)$.
Let $\pmu := \pF\pbeta$ for some $\pbeta \in \mbR^{D}$.
We fit regression coefficients $\hat{\pbeta}_T = \pF_{\cdot,T}^\dagger \pmu,  \hat{\pbeta}_{T^c} =\pmb{0} \text{ with } \pF_{\cdot,T}^\dagger = (\pF^*_{\cdot,T}\pF_{\cdot,T})^{-1}\pF^*_{\cdot,T}$.
Then, the following
\begin{align*}
\mathbb{E}_{\pbeta}[\|\pbeta - \hat{\pbeta}\|^2 \mid S,T] \xrightarrow{a.s.} 
1 - {\rho_p}
\end{align*}
holds when $\rho_p=p/D \in (0,1)$ is fixed. 
\end{corollary}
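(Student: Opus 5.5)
The plan is to compute both parts of Corollary~\ref{cor2} directly, using the unitarity of $\pF$ and the fact that sampling a full set of rows or columns yields an orthonormal system. This avoids any spectral argument. I will also check that the answers agree with the limiting formula of Corollary~\ref{cor1}, read at $\rho_p=1$ and at $\rho_n=1$ respectively.

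\textbf{Case $T=[D]$ (row sampling).} Since $\pF\pF^*=\pI_D$, the principal block satisfies $\pF_{S,\cdot}\pF_{S,\cdot}^*=\pI_{|S|}$. The pseudoinverse therefore reduces to $\pF_{S,\cdot}^\dagger=\pF_{S,\cdot}^*$, and
\[
\hat{\pbeta}=\pF_{S,\cdot}^*\pF_{S,\cdot}\pbeta=\pP\pbeta ,
\]
where $\pP$ is the orthogonal projection of rank $|S|$. The error is $\pbeta-\hat{\pbeta}=(\pI_D-\pP)\pbeta$. Using \eqref{eq:beta},
\[
\mbE_{\pbeta}\|(\pI_D-\pP)\pbeta\|^2=\tfrac1D\tr(\pI_D-\pP)=1-\tfrac{|S|}{D}.
\]
This is also what Theorem~\ref{thm2} gives with $\sigma=0$. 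Here $T^c=\emptyset$, so every $\lambda_i$ equals $0$, the rank is $|S|$, and the formula evaluates to $1-2|S|/D+|S|/D$. By the strong law of large numbers, $|S|/D$ is an average of i.i.d.\ Bernoulli$(\rho_n)$ variables, so $|S|/D\to\rho_n$ almost surely. This yields the limit $1-\rho_n$.

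\textbf{Case $S=[D]$ (column sampling).} Here $\pF_{\cdot,T}^*\pF_{\cdot,T}=\pI_{|T|}$ because the columns of $\pF$ are orthonormal. Hence
\[
\hat{\pbeta}_T=\pF_{\cdot,T}^*\pF\pbeta=\pbeta_T ,
\]
since $\pF_{\cdot,T}^*\pF$ is the row selection $\pI_{T,\cdot}$. The error is then exactly $\pbeta_{T^c}$. By \eqref{eq1},
\[
\mbE\|\pbeta_{T^c}\|^2=\tfrac{D-|T|}{D}.
\]
The strong law again gives $|T|/D\to\rho_p$ almost surely, so the limit is $1-\rho_p$.

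The only delicate points are the following.
\begin{enumerate}
\item I must confirm that no rank deficiency can occur. This holds automatically, since both Gram matrices are identities.
\item The conditioning on $T$ in the first statement, and on $S$ in the second, is vacuous.
\end{enumerate}
With these two checks in place, the argument is a routine computation.
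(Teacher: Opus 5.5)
Your argument is correct. Its core is the same computation as Remark~\ref{rem1}: $\pF_{S,\cdot}\pF_{S,\cdot}^*=\pI_{|S|}$ (resp.\ $\pF_{\cdot,T}^*\pF_{\cdot,T}=\pI_{|T|}$), so the pseudoinverse collapses to the adjoint and the risk is a normalized trace of a projection.

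The bookkeeping differs, though:
\begin{itemize}
\item The paper obtains Corollary~\ref{cor2} as the specialization of Corollary~\ref{cor1} at $\rho_p=1$ (resp.\ $\rho_n=1$), i.e.\ through the spectral limits behind Theorem~\ref{thm1}.
\item In Remark~\ref{rem1} the paper averages over $S$ to get the exact identity $\mbE_{S,T,\pbeta}\|\pbeta-\hat{\pbeta}\|^2=1-\rho_n$ at every finite $D$.
\item You instead keep $S$ (resp.\ $T$) fixed, obtain the exact conditional risk $1-|S|/D$ (resp.\ $1-|T|/D$), and finish with the law of large numbers.
\end{itemize}

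What your version buys:
\begin{itemize}
\item It proves the almost-sure conditional statement exactly as stated, with no spectral theory. An averaged identity alone would not give it, although the Frobenius-norm step of Remark~\ref{rem1} implicitly contains your conditional formula.
\item It writes out the column-sampling case, where $\hat{\pbeta}_T=\pbeta_T$ exactly; the paper leaves this case as similar.
\end{itemize}
The paper's route, in return, exhibits the result as the endpoint of the full double-descent curve.

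One small point of rigor concerns the law of large numbers. If the random sets for different $D$ are not generated from a single Bernoulli sequence, $|S|/D$ is a row mean of a triangular array, so the classical strong law does not literally apply. Hoeffding's inequality together with Borel--Cantelli gives the same almost-sure limit.
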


\begin{remark}\label{rem1}
Indeed, Corollary \ref{cor2} is not restricted to the asymptotic regime $D \to \infty$. The following 
\begin{align}
\mbE_{S,T,\pbeta}[\| \pbeta - \hat{\pbeta} \|^2] = 1 - {\rho_n}  \quad \text{and} \quad \mbE_{S,T,\pbeta}[\| \pbeta - \hat{\pbeta} \|^2] =  1 - {\rho_p}
\end{align}
holds for any finite $D$.
For simplification, we only  prove the first one.
Since $\pF\pF^* = \pI_D$, we have $\pF_S\pF_S^* = \pI_{|S|}$.
It implies  that $\hat{\pbeta} = \pF_{S,\cdot}^* \pmu_S = \pF_{S,\cdot}^*\pF_{S,\cdot}\pbeta$. Then
\begin{align*}
\mbE_{S,T,\pbeta}[\| \pbeta - \hat{\pbeta} \|^2] = \frac{1}{D} \mbE_{S,T}[\|\pF_{S,\cdot}^*\pF_{S,\cdot}-\pI_D\|_F^2]
=  1 - \frac{1}{D}\tr(\mbE_{S,T}[\pF_{S,\cdot}^*\pF_{S,\cdot}]  ).
\end{align*}
Denote $\pf_i$ as the $i$-th row of $\pF$. Since $\pF^*\pF = \sum_{i=1}^D\pf_i^*\pf_i$ and $\pF_{S,\cdot}^*\pF_{S,\cdot} = \sum_{i\in S}\pf_i^*\pf_i$, 
we have 
\begin{align*}
\frac{1}{D} \mbE_{S,T}[\pF_{S,\cdot}^*\pF_{S,\cdot}] = \frac{1}{D} \mbE_{S,T}\Big[\sum_{i\in S}\pf_i^*\pf_i\Big] =
=\frac{1}{D} \sum_{i=1}^D\mathbb{P}[i\in S]\cdot \pf_i^*\pf_i = \frac{n}{D^2}\pF^*\pF = \frac{1}{D}{\rho_n}\pI_D,
\end{align*}
which leads to
\begin{align}\label{eq:be}
\mbE_{S,T,\pbeta}[\| \pbeta - \hat{\pbeta} \|^2] = 1 - {\rho_n}.
\end{align}
\end{remark}

\section{Truncated Spectral Distribution}\label{sec:3}

We  present some tools from random Fourier matrix theory that we will need. Farrell in \cite{2011Limiting} systematically developed the discrete Stieltjes transform, which was subsequently utilized by Belkin et al. \cite{Belkin2020} to facilitate asymptotic analysis in the high-dimensional regime $p>n$. However, when $p<n$, the standard discrete Stieltjes transform exhibits pathological behavior that limits its applicability. 
To overcome these limitations, and building on the insights from \cite{Belkin2020}, 
we introduce a modified framework: the truncated discrete Stieltjes transform.

Let $\mu$ be the empirical spectral distribution (ESD) of a Hermitian $\pA \in \mbC^{n\times n}$, then the Stieltjes transform $s_{\mu}(z)$ of $\mu$ is defined as:
\begin{align*}
s_{\mu}(z) = \int \frac{1}{x-z} d\mu(x), \text{ for } z\in \mbC\setminus \supp(\mu).
\end{align*}
Or, for a discrete spectrum $\{\lambda_1, \dots, \lambda_n\}$:
\begin{align}\label{s}
s_{\lambda}(z) = \frac{1}{n}\sum_{i=1}^{n}\frac{1}{\lambda_i-z} 
\text{ for } 
z\in \mbC\setminus \{\lambda_1,\ldots,\lambda_n\}.
\end{align}
The Stieltjes transform is well-defined on the upper and lower half-planes in the complex plane. See \cite[Section 2.4.3]{tao2012topics} for more details. 
The discrete $\eta$-transform \cite{tulino2004random} is defined as 
\begin{align*}
\eta_{\lambda}(z) = \frac{1}{n} \sum_{i=1}^{n} \frac{1}{1+z\lambda_i}, \text{ for } z>0.
\end{align*}
See \cite{2011Limiting} for more details. We know that
\begin{align}\label{eq:s_eta}
s_{\lambda}(z) = -\frac{1}{z}\eta_{\lambda}\left(-\frac{1}{z}\right).
\end{align}
 For completeness, when $z>0$, the expression $-1/z<0$ falls outside the original domain of the $\eta$-transform. In this case, if one wishes to interpret the relation above, it should be understood in terms of the analytic continuation of the $\eta$-transform, rather than its primary definition.

Let $\pP_D\pF\pQ_D$ be the partial DFT matrix obtained from $\pF \in \mbC^{D\times D}$, where  $\pP_D$ ($\pQ_D$) denotes the random diagonal projection matrix whose diagonal entries are independent and equal to $1$ with probability $1- u$ ($1-v$) and equal to $0$ with probability $u$ ($v$). 
Farrell in \cite{2011Limiting} gave the asymptotic $\eta$-transform as $D \to +\infty$.
\begin{lemma}\cite[Proposition 2.4]{2011Limiting}\label{lem2}
Let $\pP_D\pF\pQ_D$ be defined as above. 
Then, the $\eta$-transform of $\pP_D\pF\pQ_D\pF^*\pP_D$ converges almost surely to the asymptotic $\eta$-transform:
\begin{align}
\eta_{u,v}(z) = \frac{1+(u+v)z+\sqrt{1+[2(u+v)-4uv]z+(u-v)^2z^2}}{2(1+z)},  \text{ for } z>0.
\end{align}
\end{lemma}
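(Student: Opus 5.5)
The lemma is quoted from \cite{2011Limiting}, so the plan is to reconstruct its proof by free probability. Write $\pA_D=\pP_D$ and $\pmb{B}_D=\pF\pQ_D\pF^*$; both are Hermitian projections. The $\eta$-transform of $\pP_D\pF\pQ_D\pF^*\pP_D$ depends only on the moments $\frac1D\tr\big((\pA_D\pmb{B}_D)^k\big)$. Accordingly, I would first show that $\pA_D$ and $\pmb{B}_D$ are almost surely asymptotically free. The limiting spectral distribution is then the free multiplicative convolution of two Bernoulli laws, with mass $1-u$ at $1$ and mass $1-v$ at $1$. Finally I would compute this convolution explicitly via Voiculescu's $S$-transform.

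\emph{Step 1 (asymptotic freeness).} The entries of $\pmb{B}_D$ are
\[
b_{ij}=\frac1D\sum_{k\in Q}\omega^{(i-1-(j-1))k},
\]
so mixed traces $\frac1D\tr(\pA^{m_1}\pmb{B}^{n_1}\cdots)$ expand into sums over indices of products of independent Bernoulli selectors. Each such product is weighted by characters $\omega^{\sum(\cdot)}$. Averaging over the selectors reduces the expectation to counting solutions of linear congruences modulo $D$. I would show that only ``non-crossing'' index patterns survive at leading order, exactly as for Haar-rotated projections. The equidistribution of roots of unity kills crossing patterns up to $O(1/D)$. This gives freeness in expectation. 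Next I would bound the variance of each mixed trace by $O(D^{-2})$, using the independence of the selectors and the same counting. Borel--Cantelli then upgrades this to almost sure convergence. I expect this combinatorial step to be the main obstacle. One must control the number of solutions of the congruences uniformly in $D$, including non-prime $D$ where degenerate subgroups appear, as in the $D=4$ example. I would handle this by showing that such degenerate solutions form a vanishing fraction.

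\emph{Step 2 (free multiplicative convolution).} For a Bernoulli law with mass $a$ at $1$, the moment series is
\[
\psi(z)=\frac{az}{1-z}, \qquad \chi(w)=\frac{w}{a+w}, \qquad S(w)=\frac{1+w}{a+w}.
\]
Set $a=1-u$ and $b=1-v$. Multiplicativity gives
\[
S_{\pA\pmb{B}}(w)=\frac{(1+w)^2}{(a+w)(b+w)},
\]
hence $\chi_{\pA\pmb{B}}(w)=\frac{w(1+w)}{(a+w)(b+w)}$. Inverting $z=\chi(w)$ yields a quadratic equation for $w=\psi(z)$.

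\emph{Step 3 (conversion to the $\eta$-transform).} Since $\frac{1}{1+zx}=1+\frac{(-z)x}{1-(-z)x}$, we have $\eta(z)=1+\psi(-z)$. Substituting $w=\eta-1$ into the quadratic with $z\mapsto -z$ gives
\[
(1+z)\eta^2-\big(1+(u+v)z\big)\eta+uvz=0,
\]
which should be checked against the stated formula. Its roots are
\[
\eta=\frac{1+(u+v)z\pm\sqrt{1+[2(u+v)-4uv]z+(u-v)^2z^2}}{2(1+z)}.
\]
I would pick the $+$ branch, since it is the one with $\eta(0)=1$ and $0\le\eta\le1$ for $z>0$. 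Almost sure weak convergence of the ESD, whose support lies in $[0,1]$, then gives pointwise almost sure convergence of the $\eta$-transform, because $x\mapsto 1/(1+zx)$ is bounded and continuous there.
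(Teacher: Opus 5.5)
The paper does not prove this lemma; it imports it from Farrell's work. Your plan follows the standard route to this formula: asymptotic freeness of the projections $\pP_D$ and $\pF\pQ_D\pF^*$ by the moment method, then the $S$-transform for two Bernoulli laws. Your Steps 2 and 3 are correct. With $a=1-u$, $b=1-v$, the relation $z(a+w)(b+w)=w(1+w)$ becomes exactly $(1+z)\eta^2-(1+(u+v)z)\eta+uvz=0$ after setting $w=\eta-1$ and $z\mapsto -z$, and its discriminant is the one in the statement.

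\textbf{The step that fails: the variance bound.} The variance of the normalized mixed traces is not $O(D^{-2})$. Already $\frac1D\tr(\pP_D)=|P|/D$ has variance $u(1-u)/D$. Since $|F_{ik}|^2=1/D$, one has exactly $\frac1D\tr(\pP_D\pF\pQ_D\pF^*)=\frac{|P|}{D}\cdot\frac{|Q|}{D}$, whose variance is $\frac{(1-u)^2v(1-v)+(1-v)^2u(1-u)}{D}+O(D^{-2})$.

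This is structural. Unlike Wigner-type ensembles, there are only $2D$ independent inputs, each moving a normalized trace by $O(1/D)$, so fluctuations are of order $D^{-1/2}$. Chebyshev then gives tails of order $1/(Dt^2)$, which are not summable, so Borel--Cantelli yields only convergence in probability.

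The repair is standard:
\begin{itemize}
\item Flipping one selector changes $\pP_D$ or $\pF\pQ_D\pF^*$ by a rank-one projection.
\item By telescoping and $\|\pP_D\|,\|\pmb{B}_D\|\le 1$, the quantity $X=\frac1D\tr((\pP_D\pmb{B}_D)^k)$ then changes by at most $k/D$.
\item McDiarmid's inequality gives $\mathbb{P}(|X-\mbE X|>t)\le 2e^{-Dt^2/k^2}$, which is summable in $D$. A fourth-central-moment bound of order $D^{-2}$ would also work.
\end{itemize}

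\textbf{Branch selection.} The condition $0\le\eta\le 1$ does not single out the branch. For $z\ge 0$ both roots lie in $[0,1]$: the quadratic equals $uvz\ge 0$ at $\eta=0$, equals $z(1-u)(1-v)\ge 0$ at $\eta=1$, and has its vertex in $[0,1]$. What selects the $+$ branch is $\eta(0)=1$ together with continuity. Since $2(u+v)-4uv=2u(1-v)+2v(1-u)\ge 0$, the discriminant is at least $1$ for $z\ge 0$, so the two roots never meet.

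\textbf{Composite $D$ in Step 1.} The issue is not that degenerate solutions form a vanishing fraction. For even $D$, the congruence $2x\equiv 0 \pmod D$ has twice the generic number of solutions. The operative fact is the Smith normal form: a rank-$r$ system in $m$ unknowns, with integer coefficients bounded in terms of $k$, has at most $C_kD^{m-r}$ solutions modulo $D$. This bounded overcount is what keeps the crossing terms at $O(1/D)$.
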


The following results are implied by  \eqref{eq:s_eta} and Lemma \ref{lem2}.

\begin{corollary}\label{cor3}
The Stieltjes transform of $\pP_D\pF\pQ_D\pF^*\pP_D$ converges almost surely to the asymptotic Stieltjes transform:
\begin{align}\label{3point4s}
s_{u,v}(z) =  \frac{1-(u + v)\frac{1}{z}+\sqrt{1-[2(u+v)-4uv]\frac{1}{z}+(u-v)^2\frac{1}{z^2}}}{2(1-z)}.
\end{align}
\end{corollary}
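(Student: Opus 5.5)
The plan is to derive Corollary~\ref{cor3} directly from Lemma~\ref{lem2} through the identity \eqref{eq:s_eta}, $s_{\lambda}(z) = -\frac{1}{z}\eta_{\lambda}(-1/z)$. This identity holds exactly for every finite spectrum $\{\lambda_i\}$ of $\pP_D\pF\pQ_D\pF^*\pP_D$, since
\[
-\frac{1}{z}\cdot\frac{1}{1-\lambda_i/z}=\frac{1}{\lambda_i-z}.
\]
Averaging over $i$ gives the identity for the empirical transforms. The limiting Stieltjes transform should therefore be $s_{u,v}(z) := -\frac{1}{z}\eta_{u,v}(-1/z)$, and the proof has two parts: an algebraic check that this equals \eqref{3point4s}, and a justification of the almost sure convergence at the relevant points $z$.

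\textbf{Algebra.} Substitute $w=-1/z$ into the formula of Lemma~\ref{lem2}. The numerator becomes
\[
1-(u+v)\tfrac{1}{z}+\sqrt{1-[2(u+v)-4uv]\tfrac{1}{z}+(u-v)^2\tfrac{1}{z^2}},
\]
and the denominator becomes $2(1-1/z)$. Multiplying by the prefactor $-1/z$ turns the denominator into
\[
-\tfrac{1}{z}\cdot\frac{1}{2(1-1/z)}=\frac{-1}{2(z-1)}=\frac{1}{2(1-z)},
\]
which gives exactly \eqref{3point4s}. The branch of the square root must be the one obtained by analytic continuation from $w>0$, where it is the positive root. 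I would state this convention explicitly.

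\textbf{Convergence.} The eigenvalues of $\pP_D\pF\pQ_D\pF^*\pP_D$ lie in $[0,1]$, because $\|\pF\|=1$ and projections are contractions. Hence for $z\in\mbC\setminus[0,1]$ the functions $\lambda\mapsto 1/(\lambda-z)$ are bounded and continuous on $[0,1]$. Lemma~\ref{lem2} gives almost sure convergence of $\eta$ on $z>0$, which corresponds to $-1/z<0$, i.e. real $z<0$.

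To extend this, I would argue as follows.
\begin{itemize}
\item Since the $\eta$-transform on $(0,\infty)$ determines the limiting measure, its convergence on a countable dense set of $w>0$ yields weak convergence almost surely of the empirical spectral distributions to a limit law $\mu_{u,v}$ supported in $[0,1]$. One can see this because the functions $1/(1+w\lambda)$ span a dense subalgebra of $C([0,1])$ by Stone--Weierstrass.
\item Weak convergence then gives convergence of $s_\lambda(z)$ for every $z\notin[0,1]$.
\item The limit agrees with the analytic continuation of $-\frac1z\eta_{u,v}(-1/z)$ by the identity theorem, since both are analytic off $[0,1]$ and agree for $z<0$.
\end{itemize}

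The main obstacle is this extension step together with the branch choice for the square root. The first thing I would try is the weak-convergence argument above, which is cleanest. If only the real interval were needed, one could instead appeal directly to the analytic continuation remark following \eqref{eq:s_eta}.
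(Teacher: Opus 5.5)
Your proposal is correct and follows the paper's route. The paper also substitutes $-1/z$ into Lemma~\ref{lem2} through \eqref{eq:s_eta} for real $z<0$ to obtain \eqref{3point4s}, then extends by analytic continuation, and it fixes the branch by $s_{u,v}(z)\sim -1/z$ at infinity, which matches your positive root for $w>0$. Your weak-convergence step is a real improvement: the paper's continuation remark only extends the limiting formula and never shows that the empirical transforms actually converge off the negative axis. One small point is that the linear span of $\lambda\mapsto 1/(1+w\lambda)$ is not itself an algebra, since it misses $1/(1+w\lambda)^2$. Its uniform closure is an algebra, though, by partial fractions plus a limit $w'\to w$, so Stone--Weierstrass still applies.
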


 The expression above is first obtained for $z<0$ via Lemma~\ref{lem2} and the relation 
\begin{align}\label{bothsides}
s_{u,v}(z) = -\frac{1}{z}\eta_{u,v}(-1/z). 
\end{align}
Since both sides of \eqref{bothsides} are analytic functions of $z$ on
\[
\mbC\setminus \{\lambda_1,\ldots,\lambda_n\}
\]
the expression extends uniquely to the whole domain by analytic continuation.
In particular, it defines the limiting Stieltjes transform for $\operatorname{Im}(z)>0$, while for real $x>0$ it should be understood through the boundary value
\[
\lim_{\epsilon\to0^+} s_{u,v}(x+\mathrm{i}\epsilon).
\]
The branch of the square root is chosen such that
\[
s_{u,v}(z)\sim -\frac{1}{z}, \qquad |z|\to\infty,
\]
i.e., $s_{u,v}(z)= -\frac{1}{z} + O(\frac{1}{z^2})$.

It is worth noting that $\{\lambda_i\}$ represents all the eigenvalues of the zero-padding matrix  $\pP_D\pF\pQ_D\pF^*\pP_D$, with each $\lambda_i \in [0,1]$. 
Since certain DFT matrices may have some eigenvalues equal to 1, directly applying the discrete Stieltjes transform to study the limiting spectral distribution at $z=1$ in \eqref{s} may lead to divergence. 
To address this, we consider the reduced matrix $\pF_{P,Q}\pF_{P,Q}^* \in \mbC^{|P|\times |P|}$, which possesses exactly $|P|$ eigenvalues in $ [0,1]$, and assume that precisely $\tilde{r}$ of these eigenvalues are equal to $1$.
We assume that each index is included in $P$ independently with probability $(1 - u)$ and in $Q$ independently with probability $(1 - v)$.
Compared to the zero-padding matrix, the number of zero singular values of the reduced matrix is reduced by $D-|P|$ and  the remaining $|P|$ singular values are the same.
And the normalization of ESD changes from $1/D$ to $1/|P|$. 
We set the $\eta$-transform of the zero-padding matrix is 
\begin{align}
\eta_{\lambda}^{\text{pad}}(z) = \frac{1}{D} \sum_{i=1}^D \frac{1}{1+z\lambda_i}.
\end{align}
Then the $\eta$-transform of the reduced matrix is 
\begin{align}
\eta_{\lambda}^{\text{red}}(z) &= \frac{1}{|P|} \sum_{i=1}^{|P|} \frac{1}{1+z\lambda_i} = \frac{1}{|P|}\left[D\eta_{\lambda}^{\text{pad}}(z)-(D-|P|)\right] \nonumber\\
&= \frac{D}{|P|}\eta_{\lambda}^{\text{pad}}(z) + 1 - \frac{D}{|P|}.
\end{align}
Based on Lemma \ref{lem2}, we can get the $\eta$-transform of $\pF_{P,Q}\pF_{P,Q}^*$ converges almost surely to the asymptotic $\eta$-transform:
\begin{align}\label{reduced1}
\eta^{\text{red}}_{u,v}(z) = \frac{1}{1-u}\left[\frac{1+(u+v)z+\sqrt{1+[2(u+v)-4uv]z+(u-v)^2z^2}}{2(1+z)}-u\right],  \text{ for } z>0.
\end{align}
And the Stieltjes transform converges almost surely to the asymptotic Stieltjes transform:
\begin{align}\label{reduced2}
s^{\text{red}}_{u,v}(z) = \frac{1}{1-u} \left[\frac{1-(u + v)\frac{1}{z}+\sqrt{1-[2(u+v)-4uv]\frac{1}{z}+(u-v)^2\frac{1}{z^2}}}{2(1-z)} + \frac{u}{z} \right].
\end{align}
Therefore, in the analysis of DFT matrices, we introduce the truncated discrete Stieltjes transform as follows: 
\begin{align}\label{s_tilde}
\tilde{s}_{\lambda}(z) = \frac{1}{|P|}\sum_{i=1}^{|P|}\left( \frac{1}{\lambda_i-z} \cdot 1_{\lambda_i<1} \right)= s^{\text{red}}_{u,v}(z) -\frac{1}{|P|}\cdot\frac{1}{1-z}\cdot \tilde{r}.
\end{align}
Accordingly, the truncated discrete $\eta$-transform is given by: 
\begin{align}\label{eta_tilde}
\tilde{\eta}_{\lambda}(z) = \frac{1}{|P|} \sum_{i=1}^{|P|} \left( \frac{1}{1+z\lambda_i} \cdot 1_{\lambda_i<1}\right)= \eta^{\text{red}}_{u,v}(z) - \frac{1}{|P|}\cdot\frac{1}{1+z}\cdot \tilde{r},
\end{align}
for $z>0$. 
Similarly, we have
\begin{align}
\tilde{s}_{\lambda}(z) = -\frac{1}{z}\tilde{\eta}_{\lambda}\left(-\frac{1}{z}\right).
\end{align}
Farrell showed in \cite[Theorem 3.1]{2011Limiting} that when $u+v<1$, there is a point mass  {of} measure $1-(u+v)$ at $1$, while when $u+v>1$, there is no point mass at $1$. Here we need to multiply by a factor $1/(1-u)$ for normalization.
Thus, we have the following asymptotic propositions on the truncated discrete $\eta$-transform and Stieltjes transform as $D \to +\infty$.
\begin{proposition}\label{prop3p3}
Let $\pF_{P,Q}$ be the partial DFT matrix obtained from $\pF \in \mbC^{D\times D}$, where $P, Q$ are subsets of $[D]$. The truncated $\eta$-transform of $\pF_{P,Q}\pF_{P,Q}^*$ converges almost surely to the asymptotic $\eta$-transform:
\begin{align}
\tilde{\eta}_{\lambda}(z) = \frac{1}{1-u}\left[ \frac{1+(u+v)z+\sqrt{1+[2(u+v)-4uv]z+(u-v)^2z^2}}{2(1+z)} -u- \frac{(1-u-v)_+}{1+z} \right].
\end{align}
\end{proposition}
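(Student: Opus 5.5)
The identity \eqref{eta_tilde} already writes the truncated transform as
\[
\tilde{\eta}_{\lambda}(z)=\eta^{\text{red}}_{\lambda}(z)-\frac{1}{|P|}\cdot\frac{\tilde r}{1+z}.
\]
Here $\tilde r$ is the number of eigenvalues of $\pF_{P,Q}\pF_{P,Q}^*$ equal to $1$. So the statement follows once both terms converge almost surely, and I will treat them separately.

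For the first term, I will invoke Lemma~\ref{lem2} for the zero-padded matrix $\pP_D\pF\pQ_D\pF^*\pP_D$, together with the relation
\[
\eta^{\text{red}}_\lambda=\frac{D}{|P|}\,\eta^{\text{pad}}_\lambda+1-\frac{D}{|P|}.
\]
By the strong law of large numbers for the Bernoulli memberships, $|P|/D\to 1-u$ almost surely. Hence $\eta^{\text{red}}_\lambda(z)\to \eta^{\text{red}}_{u,v}(z)$ as in \eqref{reduced1}, pointwise for each $z>0$. Since the $\eta$-transforms are monotone and uniformly Lipschitz in $z$ (all $\lambda_i\in[0,1]$), this can be upgraded to an almost sure statement simultaneously for all $z>0$ via a countable dense set.

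The second term needs $\tilde r/|P|\to (1-u-v)_+/(1-u)$ almost surely, i.e.\ $\tilde r/D\to(1-u-v)_+$. This is the main obstacle: convergence of the $\eta$-transform (weak convergence of the ESD) only gives the upper bound $\limsup \mu_D(\{1\})\le \mu(\{1\})$ by the portmanteau theorem applied to the closed set $\{1\}$. For the matching lower bound I will use an exact algebraic count.

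An eigenvalue equal to $1$ of $\pF_{P,Q}\pF_{P,Q}^*$ corresponds to a vector $x$ supported on $P$ with $\|\pF_{\cdot,Q}^*x\|=\|x\|$. Since $\pF$ is unitary, this is the same as $\pF_{P,Q^c}^*x=0$. Therefore
\[
\tilde r=\dim\ker \pF_{P,Q^c}^*=|P|-\rank(\pF_{P,Q^c})\ge |P|-|Q^c|.
\]
Hence $\liminf \tilde r/D\ge (1-u)-v$ almost surely, and trivially $\tilde r\ge0$. Combined with the portmanteau upper bound, this gives $\tilde r/D\to(1-u-v)_+$. Here $\mu(\{1\})=(1-u-v)_+$ is exactly Farrell's point mass result \cite[Theorem 3.1]{2011Limiting}.

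Finally, I substitute both limits into \eqref{eta_tilde}. Expanding $\eta^{\text{red}}_{u,v}$ from \eqref{reduced1} yields the bracket in the statement of Proposition~\ref{prop3p3}: the $-u$ comes from the padding correction, and the $-(1-u-v)_+/(1+z)$ from the removed unit eigenvalues, all multiplied by $1/(1-u)$.

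If the portmanteau step turns out to be delicate, for instance because the limiting measure is not known a priori to be exactly Farrell's, I would instead read off the mass at $1$ directly from the explicit formula. The quantity $\lim_{z\to\infty}(1+z)\eta_{u,v}(z)$ minus the mass at $0$, namely
\[
\lim_{z\to\infty}(1+z)\eta_{u,v}(z)-u,
\]
recovers $(1-u-v)_+$, since $\sqrt{(u-v)^2z^2}=|u-v|z$.
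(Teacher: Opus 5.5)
Your argument is correct and uses the paper's skeleton: the decomposition \eqref{eta_tilde}, Lemma~\ref{lem2} transferred to the reduced matrix through $|P|/D\to 1-u$, and Farrell's atom $(1-u-v)_+$ at $1$. The difference is in how $\tilde r/|P|$ is handled. The paper simply reads off $\tilde r/|P|\to(1-u-v)_+/(1-u)$ from Farrell's point-mass statement, but weak convergence of the ESD does not by itself force atoms to converge. Your two-sided argument supplies exactly this missing step: portmanteau on the closed set $\{1\}$ gives the $\limsup$, and the exact count $\tilde r=|P|-\rank(\pF_{P,Q^c})\ge|P|-|Q^c|$ gives the $\liminf$. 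The same argument would also make Proposition~\ref{prop:rank_limit} rigorous, since it rests on the identical claim.

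Your fallback paragraph, however, is wrong as written. As $z\to\infty$,
$(1+z)\eta_{u,v}(z)=\tfrac12\bigl(1+(u+v)z+\sqrt{\cdots}\bigr)\sim\max(u,v)\,z\to\infty$,
so subtracting $u$ does not produce $(1-u-v)_+$. Large $z$ probes the atom at $0$ instead: $\lim_{z\to\infty}\eta_{u,v}(z)=\max(u,v)$.

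The atom at $1$ is $\mu(\{1\})=\lim_{z\to-1^+}(1+z)\eta_{u,v}(z)$. This follows by dominated convergence, since $0<\frac{1+z}{1+zx}\le 1$ for $z\in(-1,0)$ and $x\in[0,1]$, with pointwise limit $\mathbf{1}_{\{x=1\}}$. Write the radicand as $(1+(u+v)z)^2-4uvz(1+z)$. This form shows it is positive on $(-1,\infty)$, so the positive branch continues analytically from $z>0$. It also shows the radicand equals $(1-u-v)^2$ at $z=-1$, which gives $\mu(\{1\})=\tfrac12\bigl(1-u-v+|1-u-v|\bigr)=(1-u-v)_+$. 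This version also covers the boundary case $u+v=1$, which Farrell's dichotomy as quoted in the paper leaves out.
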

Here, $(1-u-v)_+$ denotes $\max(1-(u+v),0)$. Proposition \ref{prop3p3} is implied by  \eqref{eta_tilde} and \eqref{reduced1}. 
And the asymptotic Stieltjes transform as $D \to +\infty$ follows from  \eqref{s_tilde} and \eqref{reduced2}.
\begin{proposition}\label{prop3p3_trun}
Let $\pF_{P,Q}$ satisfy the assumption in Proposition \ref{prop3p3}. 
Then, the truncated Stieltjes transform of $\pF_{P,Q}\pF_{P,Q}^*$ converges almost surely to the asymptotic Stieltjes transform:
\begin{align}\label{s_trun}
\tilde{s}_{\lambda}(z) =  \frac{1}{1-u}\left[ \frac{1-(u + v)\frac{1}{z}+\sqrt{1-[2(u+v)-4uv]\frac{1}{z}+(u-v)^2\frac{1}{z^2}}}{2(1-z)} + \frac{u}{z}- \frac{(1-u-v)_+}{1-z} \right].
\end{align}
\end{proposition}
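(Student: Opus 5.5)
The plan is to start from the identity \eqref{s_tilde}, which writes the truncated Stieltjes transform exactly as the reduced Stieltjes transform minus the contribution of the eigenvalues equal to $1$:
\[
\tilde{s}_{\lambda}(z) = s^{\text{red}}_{\lambda}(z) - \frac{\tilde r}{|P|}\cdot\frac{1}{1-z}.
\]
The argument then treats the two terms separately. For the first term, Corollary~\ref{cor3} combined with the renormalization from $1/D$ to $1/|P|$ gives almost sure convergence of $s^{\text{red}}_\lambda(z)$ to \eqref{reduced2}. The renormalization is legitimate because $|P|/D\to 1-u$ almost surely by the strong law of large numbers, applied to the i.i.d.\ Bernoulli memberships. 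The same convergence can be read off from Lemma~\ref{lem2} through $s=-\frac1z\eta(-1/z)$, using analytic continuation off the support $[0,1]$. This accounts for the square-root term and the $u/z$ term in \eqref{s_trun}.

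The second term requires identifying the almost sure limit of $\tilde r/|P|$, the normalized multiplicity of the eigenvalue $1$ of $\pF_{P,Q}\pF_{P,Q}^*$. By Farrell's Theorem~3.1 in \cite{2011Limiting}, the limiting spectral measure of the padded matrix $\pP_D\pF\pQ_D\pF^*\pP_D$ has an atom at $1$ of mass $(1-u-v)_+$. Passing to the reduced matrix divides this mass by $1-u$. The target is therefore
\[
\frac{\tilde r}{|P|} \xrightarrow{a.s.} \frac{(1-u-v)_+}{1-u},
\]
and substituting it gives exactly \eqref{s_trun}. This parallels the way Proposition~\ref{prop3p3} follows from \eqref{eta_tilde} and \eqref{reduced1}. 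One could in fact derive the present result from Proposition~\ref{prop3p3} directly via $\tilde s(z)=-\frac1z\tilde\eta(-1/z)$, since $-\frac1z\cdot\frac{1}{1-1/z}=\frac{1}{1-z}$ maps the atom term correctly.

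The main obstacle is this atom step. Weak convergence of the empirical spectral distributions alone does not force the empirical mass at the single point $1$ to converge; it only gives $\limsup$ of the empirical mass $\le$ the limiting mass on closed sets. For a matching lower bound I would use an exact dimension count. The eigenvalue $1$ of $\pF_{P,Q}\pF_{P,Q}^*$ corresponds to vectors $x$ supported on $P$ with $\pF^*x$ supported on $Q$, that is, to $\mathrm{span}\{e_i\}_{i\in P}\cap \pF\,\mathrm{span}\{e_j\}_{j\in Q}$. Both subspaces have dimension close to $(1-u)D$ and $(1-v)D$, so the intersection has dimension at least $|P|+|Q|-D$. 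This gives $\tilde r/D\ge (1-u-v)+o(1)$ almost surely.

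For the upper bound, note that $\tilde r/|P|\le \mu_D([1-\epsilon,1])$ for the reduced empirical measure $\mu_D$, because the eigenvalues equal to $1$ all lie in $[1-\epsilon,1]$. Weak convergence then gives $\limsup_D \mu_D([1-\epsilon,1])$ at most the limiting mass of $[1-\epsilon,1]$, which tends to the atom as $\epsilon\to0$ since Farrell's density is integrable near $1$. When $u+v>1$ the lower bound is vacuous and the upper bound gives limit $0$, matching $(\cdot)_+$.

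It remains to pass from convergence of each term to convergence of $\tilde{s}_\lambda(z)$ itself, pointwise for each fixed $z$ with $z\notin[0,1]$. This follows because both pieces converge almost surely and the factor $1/(1-z)$ is finite there. Values of $z$ on the real axis are interpreted via boundary values, as in the discussion after Corollary~\ref{cor3}.
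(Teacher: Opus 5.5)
Your proposal is correct and follows the paper's route. You split $\tilde s_\lambda$ via \eqref{s_tilde}, pass to the limit \eqref{reduced2} for the reduced transform, and insert Farrell's atom of mass $(1-u-v)_+$ at $1$, renormalized by $1/(1-u)$. Your one addition is a real justification that $\tilde r/|P|\to(1-u-v)_+/(1-u)$ almost surely: the deterministic lower bound $\tilde r=\dim(\mathrm{span}\{e_i\}_{i\in P}\cap\pF\,\mathrm{span}\{e_j\}_{j\in Q})\ge |P|+|Q|-D$, plus the Portmanteau upper bound. The paper only asserts this step from Farrell's Theorem~3.1, even though weak convergence alone does not control the empirical mass at the single point $1$.
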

The Stieltjes transform is a well-established analytical tool in random matrix theory and free probability, widely used in the study of spectral distributions and asymptotic behavior, particularly in high-dimensional statistics.
As evident from its definition in \eqref{s}, instabilities or spectral non-invertibility may arise when $z=\lambda_i$. 
As described in Section \ref{sec:4}, the asymptotic analysis requires estimating the quantity 
$\tr((\pF_{S,T}^\dagger\pF_{S,T^c})^*(\pF_{S,T}^\dagger\pF_{S,T^c}))$.
In the regime $|T|>|S|$,  each eigenvalue $\lambda_i$ of the partial DFT matrix $\pF_{S,T^c}\pF_{S,T^c}^*$ satisfies $\lambda_i \in [0,1)$.
It ensures that the term 
\begin{align}\label{tr}
\tr((\pF_{S,T}^\dagger\pF_{S,T^c})^*(\pF_{S,T}^\dagger\pF_{S,T^c})) 
= \tr\big((\pI-\pF_{S,T^c}\pF_{S,T^c}^*)^{-1}\pF_{S,T^c}\pF^*_{S,T^c}\big) 
= \sum_{i=1}^{|S|}\frac{\lambda_i}{1-\lambda_i} 
\end{align}
is well-defined and finite. This allows the standard Stieltjes transform to be effectively employed.
However, when $|T|<|S|$, a non-negligible fraction of the eigenvalues may attain the value $\lambda_i=1$, leading to divergence in the summation and rendering the classical Stieltjes transform ill-posed. 
From the \eqref{tr}, we know that $\tr((\pF_{S,T}^\dagger\pF_{S,T^c})^*(\pF_{S,T}^\dagger\pF_{S,T^c}))$ is finite but $(\pI-\pF_{S,T^c}\pF_{S,T^c}^*)^{-1}$ and $\frac{1}{1-\lambda_i}$ are infinite since $\lambda_i$ may equal to $1$. It is clear that these two equalities do not hold. 
The first equality holds when we use the generalized $(\cdot)^{\rm inv}$.
In this way, we naturally arrive at
\begin{align*}
\tr((\pF_{S,T}^\dagger\pF_{S,T^c})^*(\pF_{S,T}^\dagger\pF_{S,T^c}))
 = \tr\big((\pI-\pF_{S,T^c}\pF_{S,T^c}^*)^{\rm inv}\pF_{S,T^c}\pF^*_{S,T^c}\big) 
 = \sum_{i=1}^{|S|}\frac{\lambda_i}{1-\lambda_i}\cdot 1_{\lambda_i<1}.
\end{align*}
To address this issue and enable a robust analysis in this setting, we introduce the truncated Stieltjes transform.
The concept of truncation has been employed in various algorithms, as demonstrated in \cite{dembo1983truncated}. Additionally, related ideas have been explored in the context of spectral distribution analysis in works such as \cite{cheng2013spectrum} and \cite{2022SURPRISES}.
We anticipate that the truncated Stieltjes transform will hold promising potential for future research across multiple domains. 
For instance, it may serve as an alternative to the standard Stieltjes transform for regularizing generalized eigenvalue problems, especially in scenarios where eigenvalues approach or exceed $1$, causing divergence issues.
Moreover, in models involving partial Fourier matrices, such as those commonly encountered in compressed sensing, the presence of unstable eigenvalues can lead to numerical challenges. In these cases, the truncated Stieltjes transform  provides more stable performance metrics and facilitates more accurate reconstruction error analysis.

\section{Proofs of main results}\label{sec:4}
In this section, we present the proofs of main theoretical results.
Different from \cite{Belkin2020}, in proving the case of $|S|>|T|$, we introduce the truncated spectral distribution for the analysis.
This approach effectively resolves the difficulties encountered by the original method when eigenvalues of $\pF_{S,T^c}\pF^*_{S,T^c}$ equal $1$ in estimating $\sum_{i=1}^{|S|} \frac{\lambda_i}{1-\lambda_i}$. Moreover, we refine the proof for the case $|S| \leq |T|$ in \cite{Belkin2020} for completeness. Consequently, we   fully characterize the ``double descent'' behavior in  the noisy cases. 

\subsection{Auxiliary Lemma}
First, we present a lemma concerning eigenvalues, which plays a role in the proof of our main results.
\begin{lemma}\label{lem1}
For a PSD matrix $\pA \in \mbC^{n\times n}$, 
\begin{align*}
\tr((\pI_n-\pA)^{\rm inv}) =  \sum_{i=1}^{n} \frac{1}{1-\lambda_i}\cdot 1_{\lambda_i \ne 1}
\end{align*}
and
\begin{align*}
\tr((\pI_n-\pA)^{\rm inv}\pA) = \sum_{i=1}^{n} \frac{\lambda_i}{1-\lambda_i}\cdot 1_{\lambda_i \ne 1},
\end{align*}
where $\lambda_i$ is the $i$-th largest eigenvalue of $\pA$.
\end{lemma}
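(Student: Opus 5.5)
The approach is to diagonalize $\pA$ unitarily and reduce both identities to scalar statements about the eigenvalues. Since $\pA$ is PSD, and in particular Hermitian, write $\pA = \pU\pLam\pU^*$ with $\pU$ unitary and $\pLam=\diag(\lambda_1,\dots,\lambda_n)$, the eigenvalues ordered decreasingly. Then
\[
\pI_n-\pA=\pU(\pI_n-\pLam)\pU^*,
\]
where $\pI_n-\pLam=\diag(1-\lambda_1,\dots,1-\lambda_n)$ is real diagonal, though its entries may be negative or zero.

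The only point needing care is identifying $(\pI_n-\pA)^{\rm inv}$. The paper defines it through an SVD $\pU'\pSig\pV'^*$ with $\pSig$ diagonal and nonnegative. To get one, absorb signs into a unitary factor:
\[
\pI_n-\pA=\pU\,|\pI_n-\pLam|\,(\pmb{D}\pU^*),
\qquad \pmb{D}=\diag(\mathrm{sgn}(1-\lambda_i)),
\]
where any entry with $\lambda_i=1$ gets sign $1$. Reordering to make $\pSig$ decreasing is a permutation, which is also unitary and does not affect the result. By definition,
\[
(\pI_n-\pA)^{\rm inv}=\pU\pmb{D}\,|\pI_n-\pLam|^{\rm inv}\,\pU^*=\pU\,\diag\!\left(\tfrac{1}{1-\lambda_i}1_{\lambda_i\ne1}\right)\pU^*.
\]

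One should check that this is independent of the chosen SVD; otherwise the definition would be ambiguous. It is, because the expression coincides with the Moore--Penrose pseudoinverse of the Hermitian matrix $\pI_n-\pA$. Indeed, the diagonal version satisfies the four Penrose conditions, and pseudoinverses are unique. This uniqueness is what I expect to be the only real obstacle, and the Penrose characterization settles it.

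With this representation both identities follow from cyclicity of the trace. First,
\[
\tr((\pI_n-\pA)^{\rm inv})=\tr\!\left(\diag\!\left(\tfrac{1}{1-\lambda_i}1_{\lambda_i\ne1}\right)\right),
\]
which is the first claim. Second,
\[
(\pI_n-\pA)^{\rm inv}\pA=\pU\,\diag\!\left(\tfrac{\lambda_i}{1-\lambda_i}1_{\lambda_i\ne1}\right)\pU^*,
\]
and taking the trace gives the second claim. Positive semidefiniteness is used only to guarantee Hermitian diagonalizability with real eigenvalues.
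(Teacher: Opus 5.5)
Your proof is correct and takes the same route as the paper: unitarily diagonalize $\pA$, move $(\cdot)^{\rm inv}$ through the unitary conjugation, and read both traces off the diagonal. Your sign-absorption argument and the identification of $(\cdot)^{\rm inv}$ with the Moore--Penrose inverse justify the step $(\pU(\pI_n-\pLam)\pU^*)^{\rm inv}=\pU(\pI_n-\pLam)^{\rm inv}\pU^*$, which the paper takes for granted even though $\pU(\pI_n-\pLam)\pU^*$ is not an SVD when some $\lambda_i>1$.
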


\begin{proof}
We let $\pU\pLam\pU^*$ denote the  {eigen-decomposition} of the matrix $\pA$ where $\pU\in\mbC^{n\times n}$ is a unitary matrix and $\pLam = \diag(\lambda_1, \lambda_2, \dots, \lambda_n) \in \mbR^{n\times n}$. Since $\pU^*\pU=\pU\pU^*=\pI_n$, we have
\begin{align*}
\tr((\pI_n-\pA)^{\rm inv}) =& \tr((\pU\pU^*-\pU\pLam\pU^*)^{\rm inv}) \\
=&\tr(\pU(\pI_n-\pLam)^{\rm inv}\pU^*)\\
=&\tr((\pI_n-\pLam)^{\rm inv}) 
\end{align*}
and
\begin{align*}
\tr((\pI_n-\pA)^{\rm inv}\pA) =& \tr((\pU\pU^*-\pU\pLam\pU^*)^{\rm inv}\pU\pLam\pU^*) \\
=&\tr(\pU(\pI_n-\pLam)^{\rm inv}\pU^*\pU\pLam\pU^*)\\
=&\tr((\pI_n-\pLam)^{\rm inv}\pLam\pU^*\pU) \\
=&\tr((\pI_n-\pLam)^{\rm inv}\pLam).
\end{align*}
Based on the definition of the generalized inverse $(\cdot)^{\rm inv}$, we have 
\begin{align*}
\tr((\pI_n-\pA)^{\rm inv}) = \tr((\pI_n-\pLam)^{\rm inv}) =  \sum_{i=1}^{n} \frac{1}{1-\lambda_i}\cdot 1_{\lambda_i \ne 1},
\end{align*}
and
\begin{align*}
\tr((\pI_n-\pA)^{\rm inv}\pA) = \tr((\pI_n-\pLam)^{\rm inv}\pLam) = \langle (\pI_n-\pLam)^{\rm inv}, \pLam \rangle =  \sum_{i=1}^{n} \frac{\lambda_i}{1-\lambda_i}\cdot 1_{\lambda_i \ne 1}.
\end{align*}
\end{proof}

\begin{proposition}\label{prop:rank_limit}
As $D,n,p \to \infty$, with $\rho_n=\frac{n}{D}$, $\rho_p=\frac{p}{D}$ fixed, we have
\[
\frac{\rank(\pF_{S,T})}{D} \longrightarrow{a.s.} \min\{\rho_n,\rho_p\}.
\]
\end{proposition}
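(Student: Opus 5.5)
The plan is to read off the rank from the limiting spectral distribution of $\pF_{S,T}\pF_{S,T}^*$, using the point masses at $0$ and $1$ computed by Farrell. Since $\rank(\pF_{S,T})=|S|-\#\{i:\lambda_i(\pF_{S,T}\pF_{S,T}^*)=0\}$, and equivalently $\rank(\pF_{S,T}) = |S| - \#\{i:\lambda_i(\pF_{S,T^c}\pF_{S,T^c}^*)=1\}$, it suffices to control the fraction of eigenvalues of the $|S|\times|S|$ matrix sitting exactly at $0$ (or at $1$ for the complementary column set). The second identity follows from $\pF_{S,T}\pF_{S,T}^*+\pF_{S,T^c}\pF_{S,T^c}^*=\pI_{|S|}$.

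\textbf{Upper bound.} We have $\rank(\pF_{S,T})\le\min\{|S|,|T|\}$ deterministically. By the strong law of large numbers for independent Bernoulli memberships, $|S|/D\to\rho_n$ and $|T|/D\to\rho_p$ almost surely. Hence $\limsup \rank(\pF_{S,T})/D\le\min\{\rho_n,\rho_p\}$ almost surely.

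\textbf{Lower bound.} Apply Farrell's point-mass result in the reduced normalization of Section~\ref{sec:3} with $P=S$, $Q=T^c$, i.e.\ $u=1-\rho_n$ and $v=\rho_p$. The eigenvalues of $\pF_{S,T^c}\pF_{S,T^c}^*$ equal to $1$ form a limiting atom of mass $(1-u-v)_+/(1-u)=(\rho_n-\rho_p)_+/\rho_n$, in units of $|S|$. So $\tilde r/D\to(\rho_n-\rho_p)_+$, and therefore
\[
\frac{\rank(\pF_{S,T})}{D}=\frac{|S|-\tilde r}{D}\to \rho_n-(\rho_n-\rho_p)_+=\min\{\rho_n,\rho_p\}.
\]

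\textbf{Main obstacle.} Weak convergence of the empirical spectral distribution only gives $\limsup$ control of the mass at a point (closed set), not $\liminf$. This is exactly the wrong direction for bounding $\tilde r$ from above. I would handle it as follows. Fix $\epsilon>0$ and use the Portmanteau theorem on the closed interval $[1-\epsilon,1]$:
\[
\limsup \frac{\tilde r}{|S|}\le \mu([1-\epsilon,1]),
\]
where $\mu$ is the limiting law. Farrell's explicit density (the square root in Lemma~\ref{lem2}) has bounded support away from $1$ apart from the atom, or at least carries no extra mass near $1$, so $\mu([1-\epsilon,1])\downarrow(\rho_n-\rho_p)_+/\rho_n$ as $\epsilon\to0$. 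This gives $\liminf\rank/D\ge\min\{\rho_n,\rho_p\}$, which combined with the upper bound yields the claim.

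\textbf{Boundary case.} When $\rho_n=\rho_p$, I would argue symmetrically with the atom at $0$ of $\pF_{S,T}\pF_{S,T}^*$. If needed, I would also check that the density is integrable near the endpoint so that no mass accumulates there; this endpoint-mass continuity is the step most likely to need care.
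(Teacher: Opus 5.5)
Your proposal is correct and follows the paper's route: write $\rank(\pF_{S,T})=|S|-\tilde r$, use $|S|/D\to\rho_n$, and obtain $\tilde r/|S|\to(\rho_n-\rho_p)_+/\rho_n$ from Farrell's atom at $1$ with $u=1-\rho_n$, $v=\rho_p$. Your sandwich supplies the step the paper skips when it goes straight from the limiting atom to convergence of $\tilde r/|S|$: the deterministic bound $\tilde r\ge(|S|-|T|)_+$ gives the $\liminf$, and Portmanteau gives the $\limsup$. One correction and simplification: Portmanteau's $\limsup$ bound on a closed set is exactly the direction you need, not the wrong one. Applying it to $F=\{1\}$ itself gives $\limsup\tilde r/|S|\le\mu(\{1\})$ directly, with no density or endpoint check. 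This also covers $\rho_n=\rho_p$, since the atom $(1-u-v)_+$ vanishes there, so no separate argument at $0$ is needed.
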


\begin{proof}
Let $\tilde r$ denote the multiplicity of the eigenvalue $1$ of $\pF_{S,T^c}\pF_{S,T^c}^*$. 
Recalling that 
\[
\tilde r = |S|-\rank(\pF_{S,T}),
\]
we divide both sides by \(D\) to obtain
\[
\frac{\rank(\pF_{S,T})}{D} = \frac{|S|}{D} -\frac{\tilde r}{D}.
\]
By the law of large numbers for Bernoulli sampling, we can get
\begin{align}
\frac{|S|}{D} \xrightarrow{a.s.} \rho_n.
\label{4point2}
\end{align}
From \cite[Theorem 3.1]{2011Limiting}, we can get
\begin{align} \label{4242}
\frac{(\rho_n-\rho_p)_+}{\rho_n}, 
\end{align}
where $(x)_+ := \max\{x,0\}$. 
It follows from \eqref{4242} that
\begin{align} \label{4343}
\frac{\tilde r}{|S|} \xrightarrow{a.s.} \frac{(\rho_n-\rho_p)_+}{\rho_n}.
\end{align}
Combining \eqref{4343} with \eqref{4point2}, we obtain
\[
\frac{\tilde r}{D}= \frac{|S|}{D}\cdot \frac{\tilde r}{|S|} \xrightarrow{a.s.} (\rho_n-\rho_p)_+.
\]
Then, we have
\[
\frac{\rank(\pF_{S,T})}{D} \xrightarrow{a.s.} \rho_n-(\rho_n-\rho_p)_+.
\]
Using the elementary identity
\[
a-(a-b)_+ = \min\{a,b\},
\]
we conclude that
\begin{align}
\frac{\rank(\pF_{S,T})}{D} \xrightarrow{a.s.} \min\{\rho_n,\rho_p\}, \label{4ponit3}
\end{align}
as $D,n,p \to \infty$, with $\rho_n=\frac{n}{D}$, $\rho_p=\frac{p}{D}$ fixed.
\end{proof}

\subsection{  {Proof of Theorem \ref{thm2}}}\label{proof_thm2}

\begin{proof}[Proof of Theorem \ref{thm2}]

Recall that
\[
\hat{\pbeta}_T = \pF_{S,T}^\dagger \pmu_S, 
\quad 
\hat{\pbeta}_{T^c} = \pmb{0},
\]
and $\pmu_S = \pF_S \pbeta + \pe_S$. Then
\begin{align}
\| \pbeta - \hat{\pbeta} \|^2 = \|\pbeta_{T^c}\|^2 + \|\pbeta_T - \hat{\pbeta}_T\|^2 = \|\pbeta_{T^c}\|^2 + \|\pbeta_T - \pF_{S,T}^\dagger(\pF_S \pbeta + \pe_S)\|^2.
\label{eq1_2}
\end{align}

\paragraph{Step 1: Conditional expectation given $S,T$}
Conditioning on \(S,T\), we take expectation with respect to the noise vector \(\pe\).
Since
\[
\mathbb{E}_{\pe}[\pe_S\pe_S^*\mid S,T] = \frac{\sigma^2}{D}\pI_{|S|}
\]
and \(\pe\) is independent of \(\pbeta\),  we obtain
\begin{align}\label{eq2_2}
 {\mbE_{\pe} [ \| \pbeta_{T} - \pF_{S,T}^\dagger(\pF_S\pbeta) + \pF_{S,T}^\dagger\pe_S \|^2 \mid \pbeta, S,T ] }= 
\mathbb{E}_{\pe}  [\| \pbeta_{T} - \pF_{S,T}^\dagger\pF_S\pbeta\|^2 \mid \pbeta, S, T] + \frac{\sigma^2}{D} \| \pF_{S,T}^\dagger \|^2_F.
\end{align}

\paragraph{Step 2: Expectation over $\pbeta$}
Using $\pF_S \pbeta = \pF_{S,T}\pbeta_T + \pF_{S,T^c}\pbeta_{T^c}$, we have
\begin{align}\label{eq3_2_new}
\pbeta_T - \pF_{S,T}^\dagger \pF_S \pbeta = (\pI - \pF_{S,T}^\dagger \pF_{S,T})\pbeta_T - \pF_{S,T}^\dagger \pF_{S,T^c}\pbeta_{T^c}.
\end{align}

By the properties of the Moore--Penrose inverse,
\[
(\pF_{S,T}^\dagger\pF_{S,T})^*=\pF_{S,T}^\dagger\pF_{S,T},
\qquad
(\pF_{S,T}^\dagger\pF_{S,T})^2
=\pF_{S,T}^\dagger(\pF_{S,T}\pF_{S,T}^\dagger)\pF_{S,T}
=\pF_{S,T}^\dagger\pF_{S,T}.
\]
Hence, $\pF^\dagger_{S,T}\pF_{S,T}$ is a Hermitian idempotent matrix and therefore is the
orthogonal projection onto the range of $\pF^\dagger_{S,T}$
\[
\operatorname{ran}(\pF_{S,T}^\dagger)=\operatorname{ran}(\pF^\dagger_{S,T}\pF_{S,T}).
\]
Moreover,
\[
\pF^\dagger_{S,T}\pF_{S,T^c}\pbeta_{T^c}\in\operatorname{ran}(\pF^\dagger_{S,T})=\operatorname{ran}(\pF^\dagger_{S,T}\pF_{S,T}).
\]
On the other hand, since $(\pF^\dagger_{S,T}\pF_{S,T})^2=\pF^\dagger_{S,T}\pF_{S,T}$,
\[
\pF^\dagger_{S,T}\pF_{S,T}(\pI-\pF^\dagger_{S,T}\pF_{S,T})\pbeta_T=(\pF^\dagger_{S,T}\pF_{S,T}-(\pF^\dagger_{S,T}\pF_{S,T})^2)\pbeta_T=0,
\]
which implies
\[
(\pI-\pF^\dagger_{S,T}\pF_{S,T})\pbeta_T\in\ker(\pF^\dagger_{S,T}\pF_{S,T}).
\]
Because $\pF^\dagger_{S,T}\pF_{S,T}$ is an orthogonal projection, we have the null space of $\pF^\dagger_{S,T}\pF_{S,T}$
\[
\ker(\pF^\dagger_{S,T}\pF_{S,T})=\operatorname{ran}(\pF^\dagger_{S,T}\pF_{S,T})^\perp .
\]
Therefore,
\[
(\pI-\pF^\dagger_{S,T}\pF_{S,T})\pbeta_T\in\operatorname{ran}(\pF^\dagger_{S,T}\pF_{S,T})^\perp .
\]
Since $\pF_{S,T}^\dagger \pF_{S,T}$ is an orthogonal projector, it follows that
\[
\langle \pbeta_T, \pF_{S,T}^\dagger \pF_{S,T}\pbeta_T \rangle= \|\pF_{S,T}^\dagger \pF_{S,T}\pbeta_T\|^2.
\]
Thus,
\begin{align}
\|\pbeta_T - \pF_{S,T}^\dagger \pF_S \pbeta\|^2= \|\pbeta_T\|^2- \|\pF_{S,T}^\dagger \pF_{S,T}\pbeta_T\|^2+ \|\pF_{S,T}^\dagger \pF_{S,T^c}\pbeta_{T^c}\|^2.
\label{eq3_2}
\end{align}
%Combining with \eqref{eq1}, \eqref{eq1_2}, \eqref{eq2_2}, \eqref{eq3_2} and given $S$ and $T$, 
Combining \eqref{eq1}, \eqref{eq1_2}, \eqref{eq2_2}, and \eqref{eq3_2}, and conditioning on \(S\) and \(T\), we obtain
\begin{align}
 {\mathbb{E}_{\pe,\pbeta}[\|\pbeta - \hat{\pbeta}\|^2 \mid S,T] }
&= 1
- \frac{1}{D}\tr\big((\pF_{S,T}^\dagger \pF_{S,T})^*(\pF_{S,T}^\dagger \pF_{S,T})\big) \nonumber \\
&\quad + \frac{1}{D}\tr\big((\pF_{S,T}^\dagger \pF_{S,T^c})^*(\pF_{S,T}^\dagger \pF_{S,T^c})\big)
+ \frac{\sigma^2}{D} \|\pF_{S,T}^\dagger\|_F^2.
\label{eq6_2}
\end{align}

\paragraph{Step 3: Rank term}
When $|S| > |T|$, we know $\pF_{S,T}^\dagger=(\pF^*_{S,T}\pF_{S,T})^{\rm inv}\pF^*_{S,T}$ and then we   can get
\begin{align*}
\tr((\pF_{S,T}^\dagger\pF_{S,T})^*(\pF_{S,T}^\dagger\pF_{S,T})) =& \tr(((\pF^*_{S,T}\pF_{S,T})^{\rm inv}\pF^*_{S,T}\pF_{S,T})^*((\pF^*_{S,T}\pF_{S,T})^{\rm inv}\pF^*_{S,T}\pF_{S,T})) \nonumber \\
=& \rank(\pF_{S,T}).
\end{align*}
When $|S| \leq |T|$, we know $\pF_{S,T}^\dagger=\pF^*_{S,T}(\pF_{S,T}\pF_{S,T}^*)^{\rm inv}$ and then we can get
\begin{align*}
\tr((\pF_{S,T}^\dagger\pF_{S,T})^*(\pF_{S,T}^\dagger\pF_{S,T})) =&\tr((\pF^*_{S,T}(\pF_{S,T}\pF_{S,T}^*)^{\rm inv}\pF_{S,T})^*(\pF^*_{S,T}(\pF_{S,T}\pF_{S,T}^*)^{\rm inv}\pF_{S,T}))\nonumber \\
=&\tr(\pF_{S,T}^*(\pF_{S,T}\pF_{S,T}^*)^{\rm inv}\pF_{S,T}) \\=& \tr((\pF_{S,T}\pF_{S,T}^*)(\pF_{S,T}\pF_{S,T}^*)^{\rm inv}) \nonumber\\
=& \rank(\pF_{S,T}).
\end{align*}
Thus, we have
\begin{align}\label{eq10}
\tr((\pF_{S,T}^\dagger\pF_{S,T})^*(\pF_{S,T}^\dagger\pF_{S,T})) = \rank(\pF_{S,T}).
\end{align}

\paragraph{Step 4: Spectral decomposition}
Since
\begin{align} \label{eq3}
\pF_{S,T}\pF_{S,T}^* = \pI_{|S|} - \pF_{S,T^c}\pF_{S,T^c}^*,
\end{align}
the two matrices $\pF_{S,T}\pF_{S,T}^*$ and $\pF_{S,T^c}\pF_{S,T^c}^*$  commute. Moreover, both matrices are Hermitian, and therefore they are simultaneously unitarily diagonalizable.% and hence are simultaneously unitarily diagonalizable. 

We first analyze the term $ \| \pF_{S,T}^\dagger \|_F^2$.
When $|S| \leq |T|$, based on \eqref{eq3}
and Lemma  \ref{lem1}, we can get
\begin{align*}
\| \pF_{S,T}^\dagger \|_F^2 =  \tr((\pF_{S,T}\pF_{S,T}^*)^{\rm inv} ) = \tr((\pI_{|S|} - \pF_{S,T^c}\pF_{S,T^c}^*)^{\rm inv}) = \sum_{i=1}^{|S|} \frac{1}{1-\lambda_i}\cdot 1_{\lambda_i \ne 1},
\end{align*}
where $\lambda_i$ is the $i$-th largest eigenvalue of $\pF_{S,T^c}\pF_{S,T^c}^*$.
When $|S| > |T|$, we know
\begin{align*}
\| \pF_{S,T}^\dagger \|_F^2 = \tr( (\pF_{S,T}^*\pF_{S,T})^{\rm inv}).
\end{align*}
We set $\pU_1\pSig_1\pV_1^*$ to be the singular value decomposition of $\pF_{S,T}$ where $\pSig_1 \in \mbR^{|S|\times |T|}$ and $\pU_1\in\mbC^{|S|\times |S|}$, $\pV_1\in\mbC^{|T|\times |T|}$ are unitary matrices with 
\begin{equation*}
\pU_1^*\pU_1=\pU_1\pU_1^*=\pI_{|S|},\quad \pV_1^*\pV_1=\pV_1\pV_1^*=\pI_{|T|}.
\end{equation*}
We also set $\pU_2\pSig_2\pV_2^*$ to be the singular value decomposition of $\pF_{S,T^c}$ where $\pSig_2 \in \mbR^{|S|\times |T^c|}$ and 
\begin{equation*}
\pU_2\in\mbC^{|S|\times |S|},\quad \pV_2\in\mbC^{|T^c|\times |T^c|}
\end{equation*}
are unitary matrices with $\pU_2^*\pU_2=\pU_2\pU_2^*=\pI_{|S|}$, $\pV_2^*\pV_2=\pV_2\pV_2^*=\pI_{|T^c|}$. Then,
\begin{align*}
\tr( (\pF_{S,T}^*\pF_{S,T})^{\rm inv} ) = \tr( (\pSig_1^\tp\pSig_1)^{\rm inv} ) =\tr( (\pSig_1\pSig_1^\tp)^{\rm inv} ).
\end{align*}
From \eqref{eq3}, we further get
\begin{align*}
\pSig_1\pSig_1^\tp = \pU_1^*\pU_2(\pI_{|S|}-\pSig_2\pSig_2^\tp)\pU_2^*\pU_1.
\end{align*}
Then, we have
\begin{align*}
\| \pF_{S,T}^\dagger \|_F^2 = \tr(\pU_1^*\pU_2(\pI_{|S|}-\pSig_2\pSig_2^\tp)^{\rm inv}\pU_2^*\pU_1) = \tr( (\pI_{|S|}-\pSig_2\pSig_2^\tp)^{\rm inv}).
\end{align*}
Here, $\pSig_2\pSig_2^\tp \in \mbR^{|S|\times |S|}$ 
is the diagonal matrix whose diagonal elements are the corresponding eigenvalues, $(\pSig_2\pSig_2^\tp)_{i,i} = \lambda_i$. Based on our definition of generalized inverse $(\cdot)^{\rm inv}$, we know
\begin{align*}
\tr( (\pI_{|S|} -\pSig_2\pSig_2^\tp)^{\rm inv}) =  \sum_{i=1}^{|S|} \frac{1}{1-\lambda_i}\cdot 1_{\lambda_i \ne 1}.
\end{align*}
Therefore, based on all the eigenvalues $\lambda_i$ of $\pF_{S,T^c}\pF_{S,T^c}^*$ satisfying \( 0\le \lambda_i\le 1 \), the following equality holds:
\begin{align} \label{eq5_2}
\| \pF_{S,T}^\dagger \|_F^2 =  \sum_{i=1}^{|S|} \frac{1}{1-\lambda_i} \cdot 1_{\lambda_i \ne 1} = \sum_{i=1}^{|S|} \frac{1}{1-\lambda_i} \cdot 1_{\lambda_i<1},  
\end{align}
%since 
%\[ \pF_{S,T^c}\pF_{S,T^c}^* \succeq 0 \] and \[ \pI_{|S|}-\pF_{S,T^c}\pF_{S,T^c}^* = \pF_{S,T}\pF_{S,T}^* \succeq 0, \] 
%all eigenvalues $\lambda_i$ of $\pF_{S,T^c}\pF_{S,T^c}^*$ satisfy \( 0\le \lambda_i\le 1 \).  
whenever $|S| > |T|$ or $|S|\leq |T|$.
We know that $\|\pF_{S,T}^\dagger\|_F^2$ and $\tr\big((\pF_{S,T}^\dagger \pF_{S,T^c})^*(\pF_{S,T}^\dagger \pF_{S,T^c})\big)$ 
can be expressed in terms of $(\pF_{S,T}\pF_{S,T}^*)^\dagger$:
\[
\|\pF_{S,T}^\dagger\|_F^2 = \tr((\pF_{S,T}\pF_{S,T}^*)^\dagger),
\quad
\tr((\pF_{S,T}^\dagger \pF_{S,T^c})^*(\pF_{S,T}^\dagger \pF_{S,T^c}))
= \tr((\pF_{S,T}\pF_{S,T}^*)^\dagger \pF_{S,T^c}\pF_{S,T^c}^*).
\]
Diagonalizing in the eigenbasis of $\pF_{S,T^c}\pF_{S,T^c}^*$ yields the result:
\begin{align} \label{eq11}
\tr((\pF_{S,T}^\dagger\pF_{S,T^c})^*(\pF_{S,T}^\dagger\pF_{S,T^c})) =  \sum_{i=1}^{|S|} \frac{\lambda_i}{1-\lambda_i} \cdot 1_{\lambda_i<1},
\end{align}
whenever $|S| > |T|$ or $|S|\leq |T|$.

\paragraph{Step 5: Combine terms}
Substituting \eqref{eq10}, \eqref{eq5_2}, \eqref{eq11} into \eqref{eq6_2}, we obtain
\begin{align}
 \mathbb{E}_{\pe, \pbeta}[\|\pbeta - \hat{\pbeta}\|^2 \mid S,T] 
&= 1 - \frac{\rank(\pF_{S,T})}{D}
+ \frac{1}{D}\sum_{i=1}^{|S|} \frac{\lambda_i}{1-\lambda_i}\mathbf{1}_{\lambda_i<1} \nonumber \\
&\quad + \frac{\sigma^2}{D} \sum_{i=1}^{|S|} \frac{1}{1-\lambda_i}\mathbf{1}_{\lambda_i<1}.
\label{eq:new7}
\end{align}
Using
\[
\frac{\lambda_i}{1-\lambda_i}= -1 + \frac{1}{1-\lambda_i},
\]
and letting $\tilde r = \#\{i:\lambda_i=1\}$ denote  the multiplicity of the eigenvalue $1$ of matrix $\pF_{S,T^c}\pF^*_{S,T^c}$, we have
\begin{align} \label{4_15}
\sum_{i=1}^{|S|} \frac{\lambda_i}{1-\lambda_i}\mathbf{1}_{\lambda_i<1}= -(|S|-\tilde r)+ \sum_{i=1}^{|S|} \frac{1}{1-\lambda_i}\mathbf{1}_{\lambda_i<1}.
\end{align}
From \eqref{eq3}, we know that 
the multiplicity of the eigenvalue $1$ of $\pF_{S,T^c}\pF_{S,T^c}^*$ equals the multiplicity of the eigenvalue $0$ of $\pF_{S,T}\pF_{S,T}^*$. Hence
\begin{align} \label{4_16}
\tilde r=|S|-\operatorname{rank}(\pF_{S,T}).
\end{align}
It follows from \eqref{4_15} and \eqref{4_16} that 
\begin{align}
 \mathbb{E}_{\pe, \pbeta}[\|\pbeta - \hat{\pbeta}\|^2 \mid S,T] 
&= 1 - \frac{2\rank(\pF_{S,T})}{D} + \frac{1 + \sigma^2}{D} 
\sum_{i=1}^{|S|} \frac{1}{1-\lambda_i}\mathbf{1}_{\lambda_i<1}.
\end{align}
This completes the proof.
\end{proof}

\subsection{Proof of Theorem \ref{thm1}} \label{proof_thm1}

\begin{proof}[Proof of Theorem \ref{thm1}]
By Theorem \ref{thm2}, we have
\begin{align}
 \mathbb{E}_{\pe, \pbeta}[\|\pbeta - \hat{\pbeta}\|^2 \mid S,T]  &= 1 - \frac{2\rank(\pF_{S,T})}{D} + \frac{1 + \sigma^2}{D} 
\sum_{i=1}^{|S|} \frac{1}{1-\lambda_i}\mathbf{1}_{\lambda_i<1}.
\label{eq:risk_start}
\end{align}
From Proposition \ref{prop:rank_limit},  we know that with $D,n,p \to \infty$, and $\rho_n$, $\rho_p$ fixed, 
\[
\frac{\rank(\pF_{S,T})}{D} \xrightarrow{a.s.} \min\{\rho_n,\rho_p\}.
\]
Therefore, with $D,n,p \to \infty$, and $\rho_n$, $\rho_p$ fixed,
\begin{align}
 \mathbb{E}_{\pe, \pbeta}[\|\pbeta - \hat{\pbeta}\|^2 \mid S,T]  
&\xrightarrow{a.s.} 1-2\min\{\rho_n,\rho_p\} + \frac{1 + \sigma^2}{D}  \sum_{i=1}^{|S|} \frac{1}{1-\lambda_i}\mathbf{1}_{\lambda_i<1}.
\label{eq:risk_reduce}
\end{align}
We now analyze the remaining spectral term.

\medskip

\noindent
\textbf{Case 1: \(\rho_p>\rho_n\).} 
In this regime, the limiting spectral distribution of
\[
\pF_{S,T^c}\pF_{S,T^c}^*
\]
has support contained in an interval $[r_-,r_+] \subset [0,1)$, with $ r_+<1$. 
Therefore, by the almost-sure weak convergence of the empirical spectral distribution and the Stieltjes transform representation in Proposition \ref{prop3p3_trun},
\begin{align}
\frac{1}{|S|}\sum_{i=1}^{|S|}\frac{1}{1-\lambda_i}\mathbf{1}_{\lambda_i<1} \xrightarrow{a.s.} -\lim_{z\to1}\tilde{s}_\lambda(z).
\label{eq:weak_conv_case1}
\end{align}
Substituting $u=1-\rho_n$, $v=\rho_p$ into  Proposition \ref{prop3p3_trun},  we first examine the numerator of $\lim_{z\to 1} \tilde s_{\lambda}(z)$, where $\tilde s_{\lambda}(z)$ is defined in \eqref{s_trun}. Here, $\max(1-(u+v),0)=0$.
As $z \to 1$, it simplifies to
\begin{align*}
1-(1-\rho_n+\rho_p)+\sqrt{(\rho_n-\rho_p)^2} =\rho_n-\rho_p + \rho_p-\rho_n=0.
\end{align*}
 {Hence both the numerator and denominator vanish as $z \to 1$.
By L' H\^{o}spital's Rule, differentiating the numerator and denominator with respect to $z$, we can obtain}
\begin{align*}
&  - \lim_{z\to1}\tilde s_{\lambda}(z) \nonumber\\
=& \frac{\lim_{z\to1} \frac{1-(1-\rho_n+\rho_p)\frac{1}{z}+\sqrt{1-(2(1-\rho_n+\rho_p)-4(1-\rho_n)\rho_p)\frac{1}{z}+(1-\rho_n-\rho_p)^2\frac{1}{z^2}}}{-2(1-z)}+\rho_n-1}{\rho_n} \nonumber\\
=& \frac{\frac{1}{2} \lim_{z\to1} \left(\frac{1-\rho_n+\rho_p}{z^2} + \frac{1}{2} \frac{(2(1-\rho_n+\rho_p)-4(1-\rho_n)\rho_p)\frac{1}{z^2}-2(1-\rho_n-\rho_p)^2\frac{1}{z^3}}{\sqrt{1-(2(1-\rho_n+\rho_p)-4(1-\rho_n)\rho_p)\frac{1}{z}+(1-\rho_n-\rho_p)^2\frac{1}{z^2}}} \right) +\rho_n-1}{\rho_n}\nonumber\\
=& \frac{1}{2\rho_n} \left(1-\rho_n+\rho_p+\frac{\rho_n+\rho_p-\rho_n^2-\rho_p^2}{\rho_p-\rho_n} \right) +1 -\frac{1}{\rho_n} \nonumber\\
=& \frac{1-\rho_n}{\rho_p-\rho_n}.
\end{align*}
Consequently,
\begin{align}
\frac{1}{|S|} \sum_{i=1}^{|S|}\frac{1}{1-\lambda_i}\mathbf{1}_{\lambda_i<1}\xrightarrow{a.s.}\frac{1-\rho_n}{\rho_p-\rho_n}.
\label{eq:case1_limit}
\end{align}
Then we can get
\[
\frac{1 + \sigma^2}{D} \sum_{i=1}^{|S|}\frac{1}{1-\lambda_i}\mathbf{1}_{\lambda_i<1} \xrightarrow{a.s.} (1 + \sigma^2)\frac{\rho_n(1-\rho_n)}{\rho_p-\rho_n}.
\]
Substituting it into \eqref{eq:risk_reduce}, when $\rho_p>\rho_n$, we have
\[
\mathbb{E}_{\pe, \pbeta}[\|\pbeta - \hat{\pbeta}\|^2 \mid S,T]   \xrightarrow{a.s.}  1 - 2\rho_n + (1+\sigma^2) \frac{\rho_n(1-\rho_n)}{\rho_p-\rho_n},
\]
with $D,n,p \to \infty$, and $\rho_n$, $\rho_p$ fixed.

\medskip

\noindent
 
\textbf{Case 2: \(\rho_p=\rho_n\).} 
In this case, the upper edge of the limiting spectral support satisfies
\[
r_+=1.
\]
Moreover, the limiting density has a non-integrable singularity at $x=1$.
Hence
\begin{equation} \label{eq:integral_div}
\int\frac1{1-x}\,d\mu(x)=+\infty.
\end{equation}
For each $\varepsilon>0$, define $f_\varepsilon(x)=\frac{1}{1-x}\mathbf{1}_{x\le 1-\varepsilon}$. Weak convergence gives $\int f_\varepsilon\,d\mu_D \xrightarrow{\text{a.s.}} \int f_\varepsilon\,d\mu$. Taking a countable sequence $\varepsilon_k\downarrow 0$ and using monotone convergence with \eqref{eq:integral_div}, we obtain
\[
\frac{1}{|S|} \sum_{i=1}^{|S|}\frac{1}{1-\lambda_i}\mathbf{1}_{\lambda_i<1} \xrightarrow{\text{a.s.}} \infty.
\]
Since $|S|/D\xrightarrow{\text{a.s.}}\rho_n>0$, from \eqref{eq:risk_reduce}, we have 
\[
\mathbb{E}_{\pe, \pbeta}[\|\pbeta - \hat{\pbeta}\|^2 \mid S,T]   \xrightarrow{a.s.} \infty,
\]
when $\rho_p=\rho_n$ with $D,n,p \to \infty$, and $\rho_n$, $\rho_p$ fixed.

\medskip

\noindent
\textbf{Case 3: \(\rho_p<\rho_n\).}
 {In this regime, the limiting spectral measure possesses an atom at \(1\).
Therefore the ordinary Stieltjes transform diverges at \(z=1\), and we instead use the truncated Stieltjes transform $\tilde s_\lambda(z)$ in  \eqref{s_trun},
which removes the contribution of the atom at \(1\).}

 Substituting $u = 1 - \rho_n$ and $v = \rho_p$ into \eqref{s_trun}, we can compute the numerator of $\lim_{z\to1} \tilde{s}_{\lambda}(z)$:
\begin{align*}
1-(1-\rho_n+\rho_p)+\sqrt{(\rho_n-\rho_p)^2} -[1 - (1 - \rho_n + \rho_p)] =2(\rho_n-\rho_p) -2(\rho_n-\rho_p)=0.
\end{align*}
 {Hence both the numerator and denominator of $\lim_{z\to1} \tilde{s}_{\lambda}(z)$ vanish as $z \to 1$. By L' H\^{o}spital's Rule, differentiating the numerator and denominator with respect to $z$, we can obtain}
\begin{align*}
& { - \lim_{z\to1} \tilde{s}_{\lambda}(z)} \nonumber\\
=& \frac{\lim_{z\to1} \frac{1-(1-\rho_n+\rho_p)\frac{1}{z}+\sqrt{1-(2(1-\rho_n+\rho_p)-4(1-\rho_n)\rho_p)\frac{1}{z}+(1-\rho_n-\rho_p)^2\frac{1}{z^2}}-2(\rho_n-\rho_p)}{-2(1-z)}+\rho_n-1 }{\rho_n} \nonumber\\
=& \frac{ \frac{1}{2}\lim_{z\to1} \left(\frac{1-\rho_n+\rho_p}{z^2} + \frac{1}{2} \frac{(2(1-\rho_n+\rho_p)-4(1-\rho_n)\rho_p)\frac{1}{z^2}-2(1-\rho_n-\rho_p)^2\frac{1}{z^3}}{\sqrt{1-(2(1-\rho_n+\rho_p)-4(1-\rho_n)\rho_p)\frac{1}{z}+(1-\rho_n-\rho_p)^2\frac{1}{z^2}}} \right)+\rho_n-1}{\rho_n}.
\end{align*}
 {Taking the limit as $z \to 1$ and simplifying yields
\[
\frac{\frac{1}{2} \left(1 - \rho_n + \rho_p+ \frac{\rho_n + \rho_p - \rho_n^2 - \rho_p^2}{\rho_n - \rho_p}\right)+\rho_n-1}{\rho_n},
\]
which further simplifies to
\[
\frac{\rho_p(1 - \rho_p) }{\rho_n(\rho_n - \rho_p)}.
\]
}
Hence,
\begin{align}
\frac1{|S|}\sum_{i=1}^{|S|}\frac1{1-\lambda_i}\mathbf 1_{\lambda_i<1}\xrightarrow{a.s.}\frac{\rho_p(1 - \rho_p) }{\rho_n(\rho_n - \rho_p)}.
\label{eq:case3_limit}
\end{align}
Since $\frac{|S|}{D}\xrightarrow{\mathrm{a.s.}} \rho_n$ by the strong law of large numbers,
combining this with \eqref{eq:case3_limit} gives
\begin{align} \label{4ponit21}
\frac1D\sum_{i=1}^{|S|}\frac1{1-\lambda_i}\mathbf 1_{\lambda_i<1}\xrightarrow{a.s.}\frac{\rho_p(1-\rho_p)}{\rho_n-\rho_p}.
\end{align}
Substituting \eqref{4ponit21} into \eqref{eq:risk_reduce},
\[
\mathbb{E}_{\pe, \pbeta}[\|\pbeta - \hat{\pbeta}\|^2 \mid S,T]   \xrightarrow{a.s.} 1 - 2\rho_p + (1+\sigma^2) \frac{\rho_p(1-\rho_p)}{\rho_n-\rho_p}.
\]

Combining the three cases completes the proof.
\end{proof}

\section{Numerical Illustrations}\label{sec:6}
In this section, we examine the behavior of  {the} Fourier series model  through numerical simulations with $D=512$ and $D=1024$. Each curve represents an average of errors over $50$ independent random realizations of the partial DFT matrix $\pF_{S,T} \in \mbC^{n\times p}$. 

In the first experiment, we fix $\rho_n=1/4$ and vary $\rho_p$ from $0$ to $1$. Figure~\ref{Fig10} displays the risk as a function of $\rho_p$ in the noiseless Fourier series model. As $\rho_p$ approaches $\rho_n$, the risk increases sharply, while it decreases again as $\rho_p$ approaches $1$, exhibiting the characteristic double descent behavior. In particular, a pronounced peak appears near the interpolation threshold $\rho_p=\rho_n=1/4$. 
To compare with the theoretical prediction in Theorem~\ref{thm2}, we partition the interval $[0,1]$ into  { $512$ (respectively $1024$)} equally spaced values of $\rho_p$.  { For each value of $\rho_p$, we generate $50$ independent realizations of the random subsets $S$ and $T$ according to the Bernoulli sampling model, compute the eigenvalues of
\[
\pF_{S,T^c}\pF_{S,T^c}^*,
\]
and numerically evaluate the expression in Eq.~\eqref{points} in the noiseless setting ($\sigma=0$). The resulting values are then averaged over the $50$ realizations and plotted as red points in Figure~\ref{Fig10}.}
The empirical risks and the corresponding theoretical predictions align almost perfectly, providing strong numerical support for the non-asymptotic result in Theorem~\ref{thm2}. 
%In the first experiments, we  set $\rho_n = n/D = 1/4$ and vary $\rho_p$ from $0$ to $1$. Figure \ref{Fig10} display  the risk as a function of $p/D$ in the noiseless Fourier series models. It is observed in  Figure \ref{Fig10} that curves  increase as $\rho_p$ approaches $\rho_n$ while it decreases as $\rho_p$ approaches $1$.  Specifically, they diverge to $+\infty$ when $\rho_p$ approaches to $\rho_n=1/4$. We partition the domain of $p/D \in [0,1]$ into $512$ ($1024$) equal intervals, fix $\rho_n=1/4$, and evaluate  \eqref{points} for $50$ independent random realizations of $\pF_{S,T} \in \mbC^{n\times p}$. The resulting values, shown as red points in Figure \ref{Fig10}, align almost perfectly, thereby confirming the validity of our non-asymptotic result in Theorem \ref{thm2}. 
Then we set $\rho_n=1$ or $\rho_p=1$. It is observed   in Figures \ref{Fig13} and \ref{Fig15} that the numerical simulations
support the theoretical results in Corollary \ref{cor2} and Remark \ref{rem1} respectively.

\begin{figure}[h!] %nonasymptotic3.m
\centering 
\begin{minipage}[b]{0.45\textwidth}
%\setcaptionmargin{1in} 
\centering 
\includegraphics[width=1\textwidth]{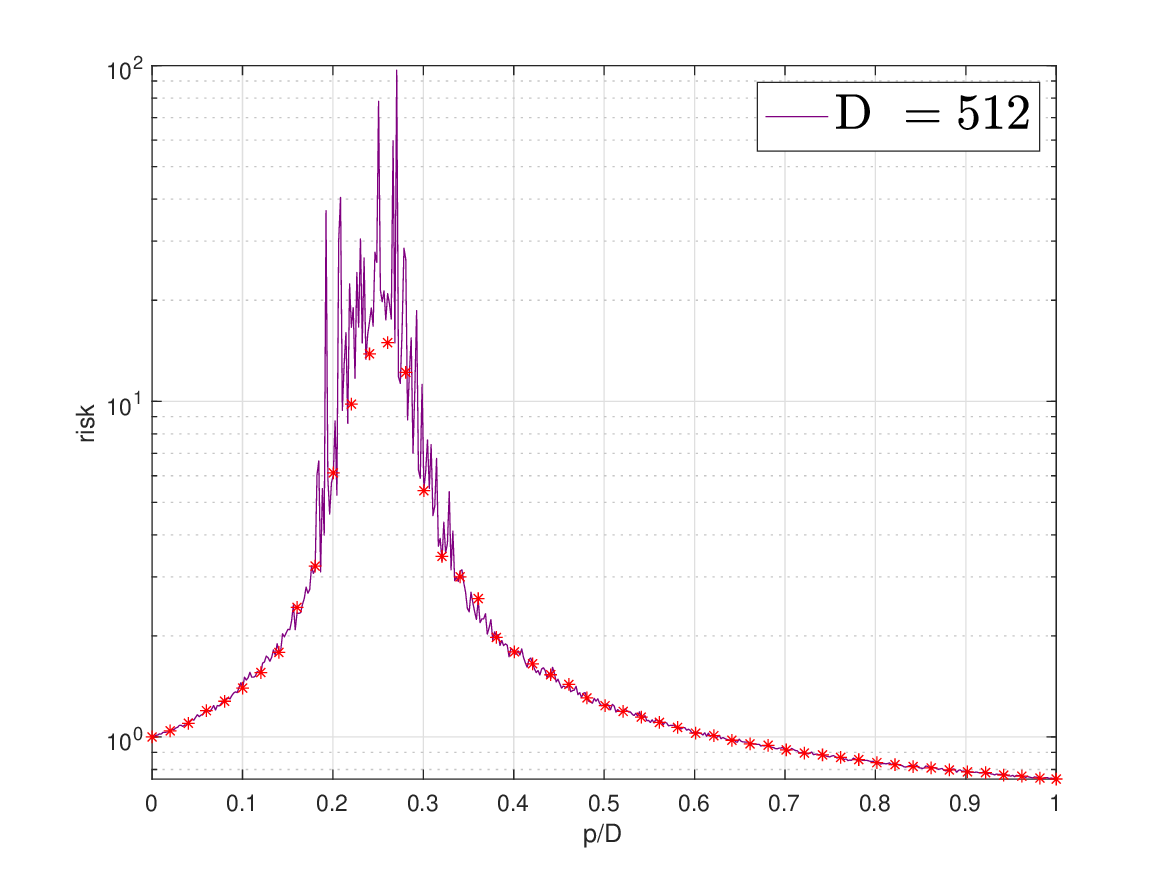} 
%\quad
%\subcaption{.}
%\label{Fig8}
\end{minipage}
\begin{minipage}[b]{0.45\textwidth}
%\setcaptionmargin{1in} 
\centering 
\includegraphics[width=1\textwidth]{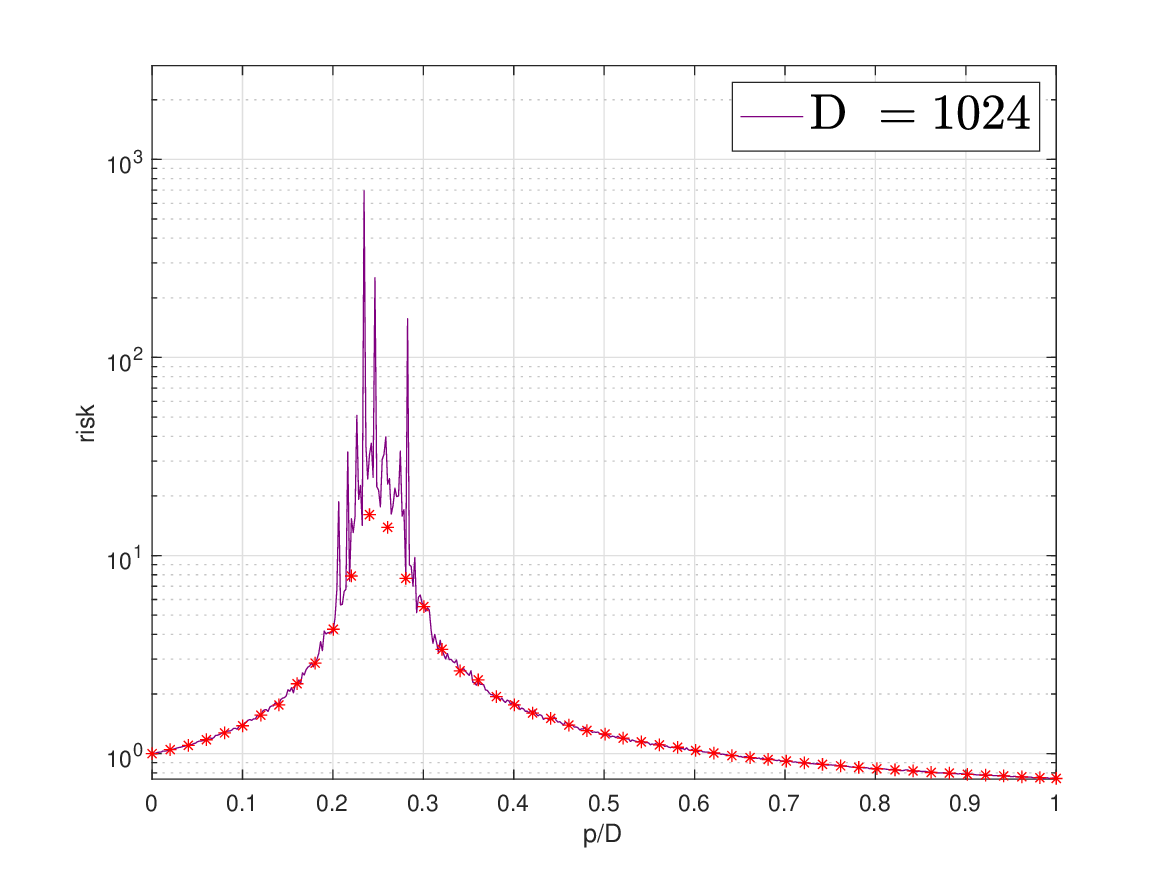} 
%\quad
%\subcaption{.}
%\label{Fig9}
\end{minipage}
\caption{Plot of the risk as a function of $\rho_p$ in the noiseless Fourier series model. Here, $\pbeta$ was chosen uniformly at random (once) from the unit sphere in $\mathbb{R}^{D}$ for  {$D = 512$} or  {$1024$}.  For each value of $\rho_p$, we computed $\hat{\pbeta}$ using $50$ independent random realizations of the subsets $S$ (with $\rho_n=1/4$) and $T$, and plotted the averaged value of $\|\pbeta-\hat{\pbeta}\|^2$. The red dots represent numerical evaluations of the theoretical expression in Theorem~\ref{thm2}, obtained by averaging over the same number of independent realizations of $(S,T)$. }
\label{Fig10}
\end{figure}

The second experiment considers  {the} noisy Fourier series models. 
We test the  Gaussian  noise  with  {$\sigma = 10^{-1}, 10^{-2}, 1, 5, 10$}.
Figure \ref{Fig14} shows the average value of $\| \pbeta   -  \hat{\pbeta} \|^2$ in the noisy Fourier series model. 
When $\rho_p$ is close to $\rho_n=1/4$, we can see the curves diverge to $+\infty$.
 Our finite-dimensional numerical experiments are broadly consistent with these theoretical predictions (Theorem \ref{thm1} and Corollary \ref{cor1}); see Figure~\ref{Fig10} for the noiseless case and Figure~\ref{Fig14} for the noisy case. 
In the noiseless case ($\sigma = 0$), the risk in the over-parameterized regime is strictly smaller than
that in the under-parameterized regime. 
In the noisy case ($\sigma > 0$), the minimal risk depends on the interplay between the noise level and the aspect ratios: 
for sufficiently small noise levels, the over-parameterized regime typically achieves smaller risk,
while for larger noise levels, the under-parameterized regime may become preferable due to the appearance of an interior minimizer.
In some regimes, the under-parameterized model may achieve comparable or even lower minimal risk, while in others, the over-parameterized model remains superior.

We note that 
Figure \ref{Fig10} and 
Figure \ref{Fig14}
show degenerate double descent risk curves, where the first descent is degenerate (i.e., the sweet spot that balances bias and variance is at $\rho_p = 0$).
The third experiment fixes the vector $\pbeta$ with $\beta_j \propto 1/j^2$ and considers a kind of the prescient selection model studied by Breiman and Freedman \cite{breiman1983many}.  
Figure \ref{Fig16} presents the average value of $\| \pbeta   -  \hat{\pbeta} \|^2$ in the noiseless Fourier series model under a deterministic setting of $T$ and a random choice of $S$ (with $\rho_p=1/4$). 
The behavior of the risk which
 is similar to the experimental observations of Gaussian random matrices in \cite{Belkin2020},
revealing a full double descent curve, 
underscores the stark contrast between the random selection model and the prescient selection model.
 %{It is worth noting that this experiment is conducted under a structured Fourier model with a prescient feature selection mechanism, which differs fundamentally from the isotropic random design  commonly considered in the benign overfitting literature. As a result, the observed behavior in the over-parameterized regime reflects the interaction between the structured features and the selection process, and does not contradict the benign overfitting phenomenon, which is established under different modeling assumptions.}
It is worth noting that this experiment is conducted under a structured Fourier model $\pF$ with structured signal vectors $\pbeta$ and a prescient feature selection mechanism. In particular, the Fourier coefficients of $\pbeta$ follow a deterministic decay pattern rather than an isotropic random distribution, which differs fundamentally from the isotropic random design settings commonly considered in the benign overfitting literature. The observed behavior in the over-parameterized regime resembles the tempered overfitting phenomenon \cite{mallinar2022benign}, where the test risk converges to a value above the Bayesian risk. This behavior arises from the interaction between the structured features and the selection mechanism, and does not contradict the benign overfitting phenomenon established under different modeling assumptions.

\begin{figure}[h!] 
\centering 
\begin{minipage}[b]{0.45\textwidth}
%\setcaptionmargin{1in} 
\centering 
\includegraphics[width=1\textwidth]{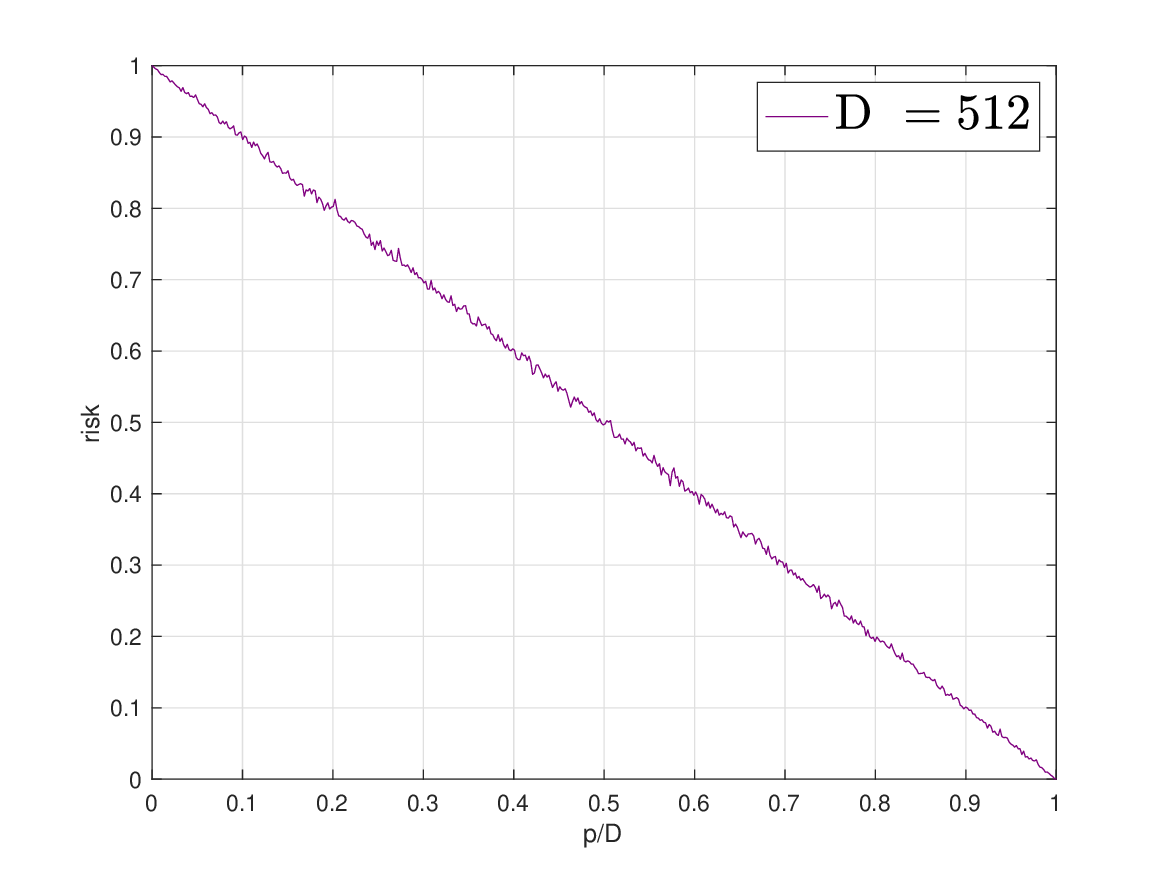} %n_equal_D2.m
%\quad
%\subcaption{.}
%\label{Fig4}
\end{minipage}
\begin{minipage}[b]{0.45\textwidth}
%\setcaptionmargin{1in} 
\centering 
\includegraphics[width=1\textwidth]{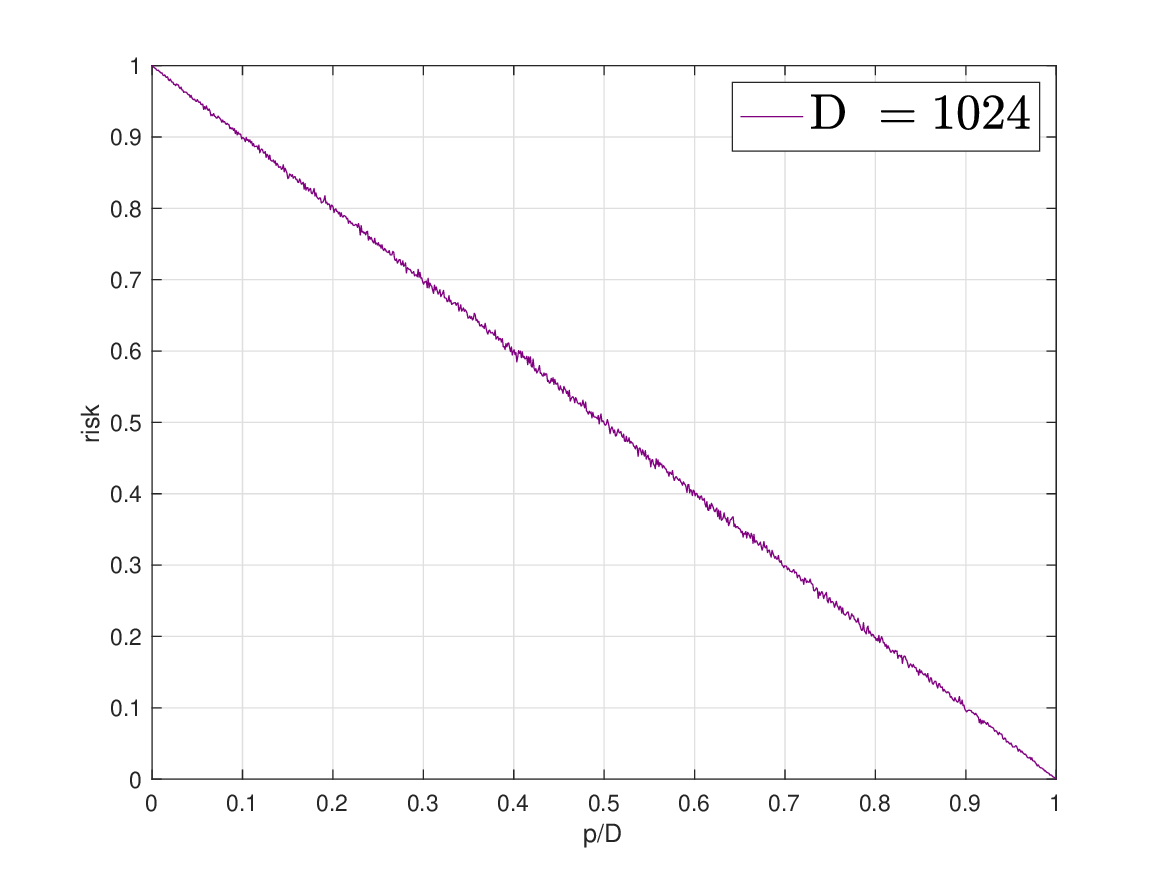} %n_equal_D2.m
%\quad
%\subcaption{.}
%\label{Fig4}
\end{minipage}
\caption{Plot of risk as a function of $\rho_p$ in the noiseless Fourier series model. Here, $\rho_n=1$ and $\pbeta$ was chosen uniformly at random (once) from the unit sphere in $\mbR^{D}$ for $D = 512$ or $1024$. We then computed $\hat{\pbeta}$ from $50$ independent random choices of $S$  and plotted the average value of $\| \pbeta   -  \hat{\pbeta} \|^2$. }
\label{Fig13}
\end{figure}

\begin{figure}[h!] 
\centering 
\begin{minipage}[b]{0.45\textwidth}
%\setcaptionmargin{1in} 
\centering 
\includegraphics[width=1\textwidth]{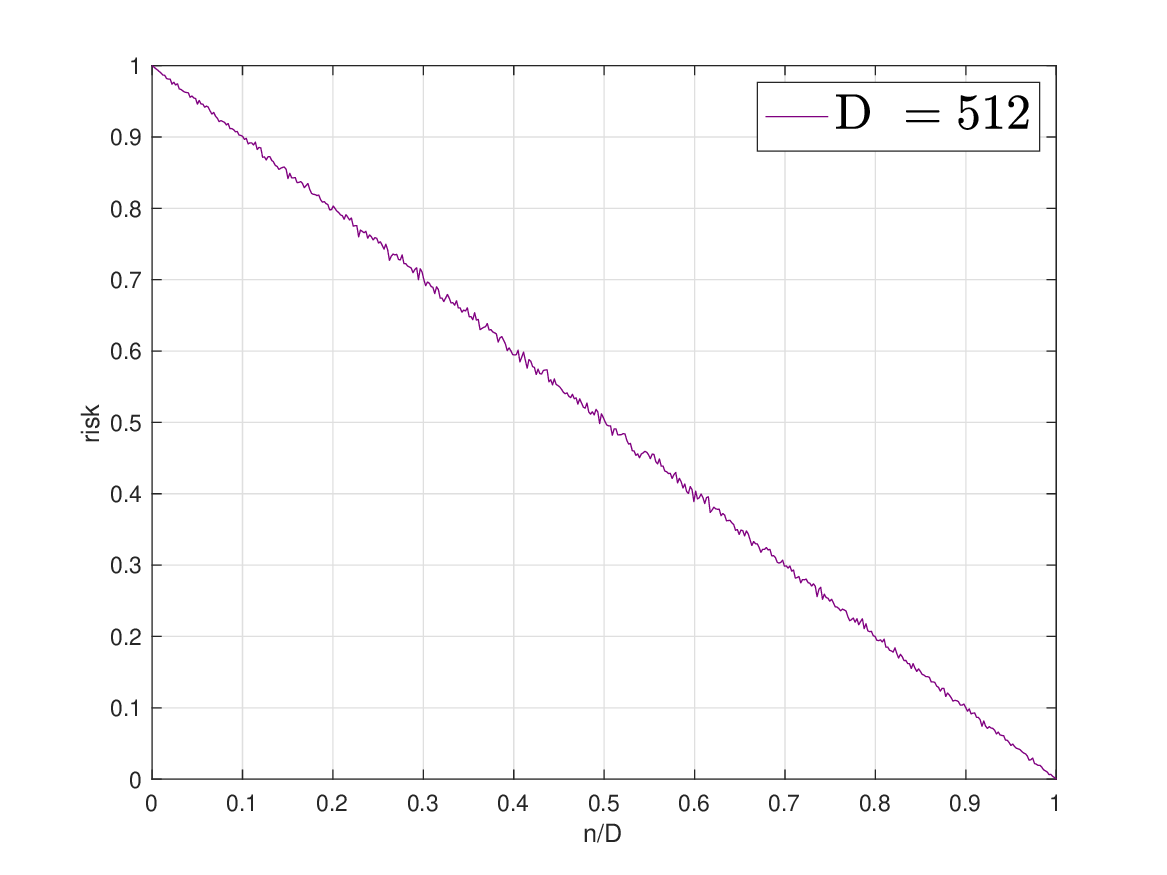} %p_equal_D2.m
%\quad
%\subcaption{.}
%\label{Fig5}
\end{minipage}
\begin{minipage}[b]{0.45\textwidth}
%\setcaptionmargin{1in} 
\centering 
\includegraphics[width=1\textwidth]{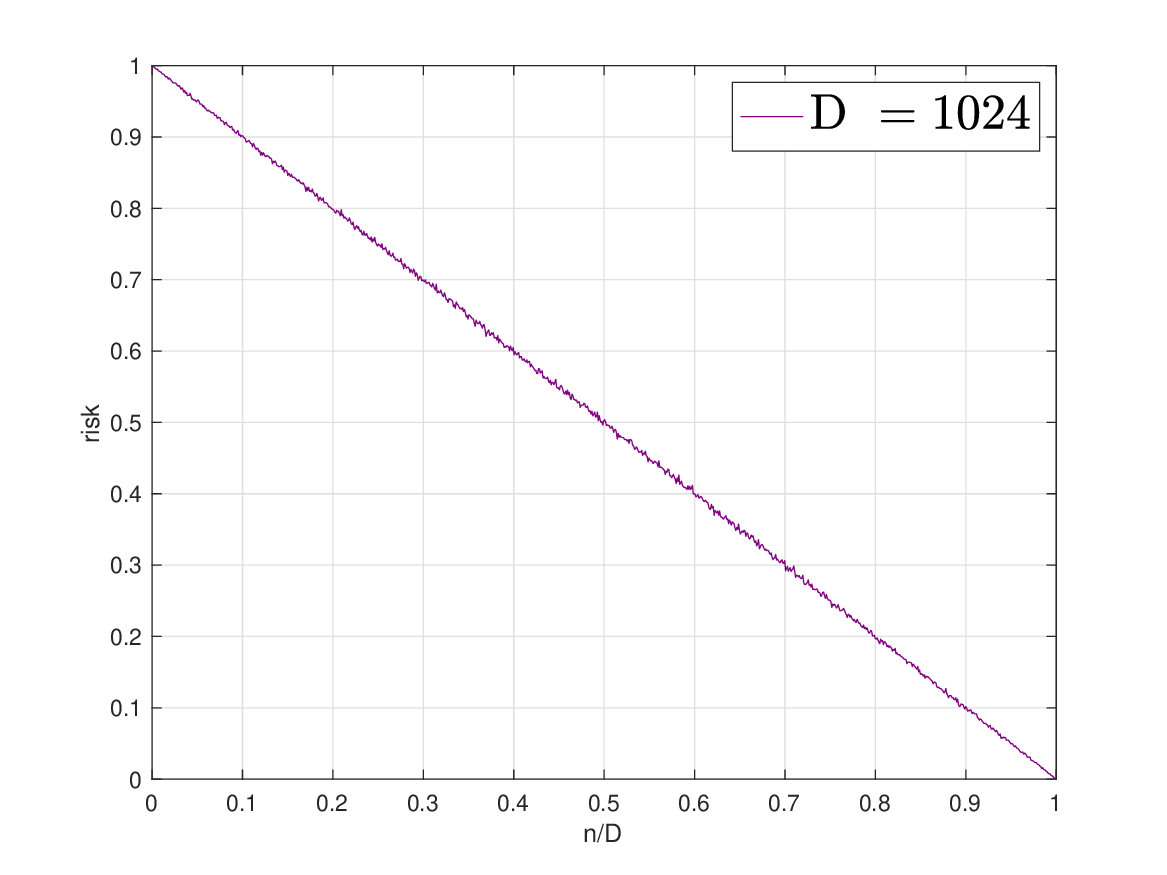} %p_equal_D2.m
%\quad
%\subcaption{.}
%\label{Fig5}
\end{minipage}
\caption{Plot of risk as a function of $\rho_n$ in the noiseless Fourier series model. Here, $\rho_p=1$ and $\pbeta$ was chosen uniformly at random (once) from the unit sphere in $\mbR^{D}$ for $D = 512$ or $1024$. We then computed $\hat{\pbeta}$ from $50$ independent random choices of $T$  and plotted the average value of $\| \pbeta   -  \hat{\pbeta} \|^2$. }
\label{Fig15}
\end{figure}

\begin{figure}[h!] 
\centering 
\begin{minipage}[b]{0.45\textwidth}
%\setcaptionmargin{1in} 
\centering 
\includegraphics[width=1\textwidth]{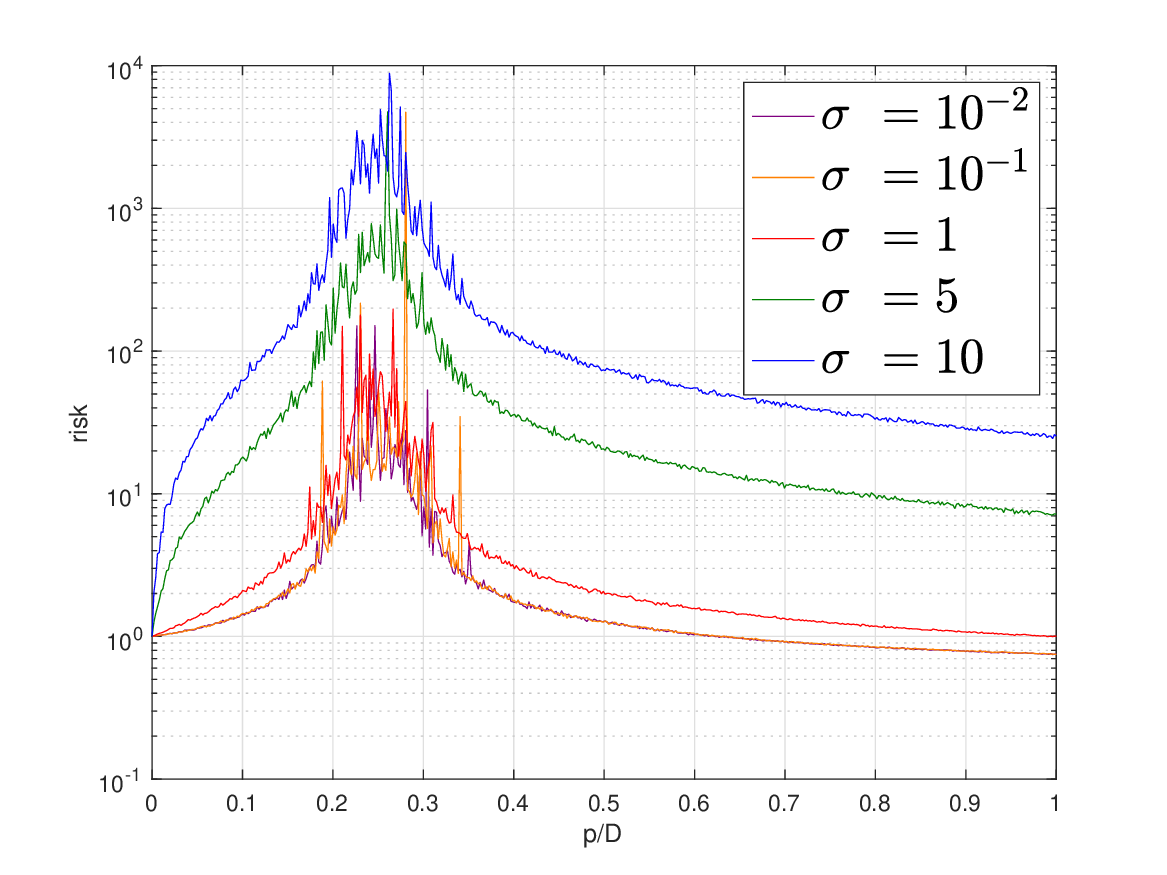} %risk_average3_noise.m
%\quad
\subcaption*{$D=512$.}
%\label{Fig6}
\end{minipage}
\begin{minipage}[b]{0.45\textwidth}
%\setcaptionmargin{1in} 
\centering 
\includegraphics[width=1\textwidth]{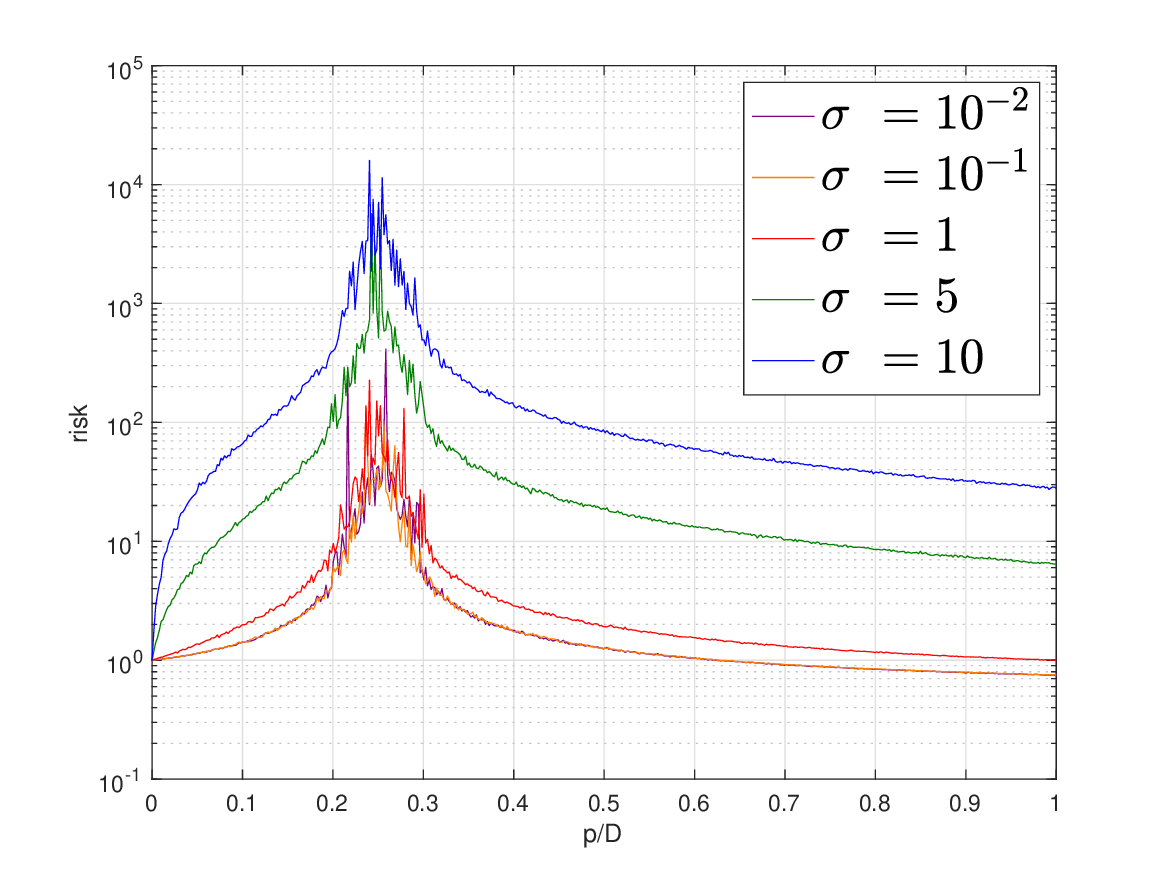} %risk_average3_noise.m
%\quad
\subcaption*{$D=1024$.}
%\label{Fig7}
\end{minipage}
\caption{Plot of risk as a function of $\rho_p$ in the noisy Fourier series model with $\pe \sim \mathcal{N}(0,\sigma^2\pI)$. Here, $\pbeta$ was chosen uniformly at random (once) from the unit sphere in $\mbR^{D}$ for $D = 512 \text{ or } 1024$. We then computed $\hat{\pbeta}$ from $50$ independent random choices of $S$ (with $\rho_n = 1/4$) and $T$ and plotted the average value of $\| \pbeta   -  \hat{\pbeta} \|^2$.}
\label{Fig14}
\end{figure}

\begin{figure}[h!] 
\centering 
\begin{minipage}[b]{0.45\textwidth}
%\setcaptionmargin{1in} 
\centering 
\includegraphics[width=1\textwidth]{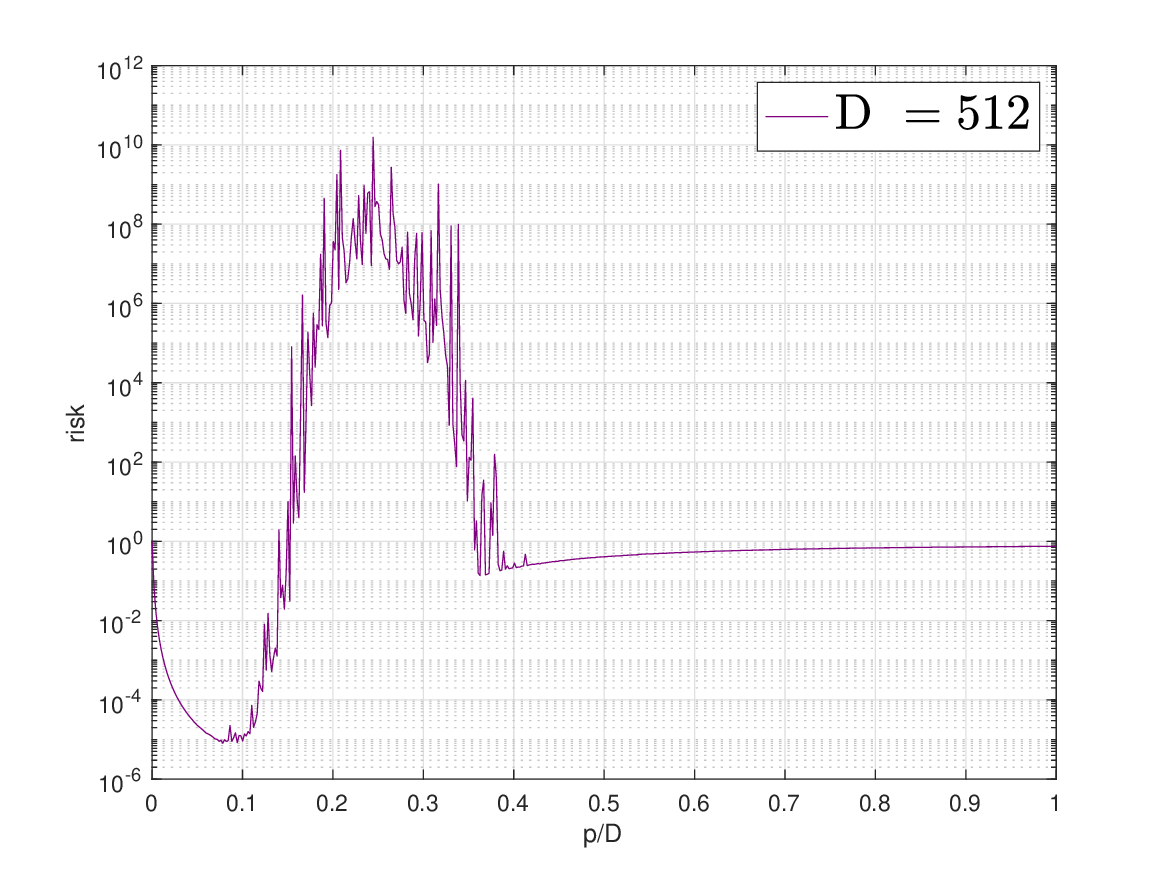} %risk_average3_beta.m
%\quad
%\subcaption{.}
%\label{Fig6}
\end{minipage}
\begin{minipage}[b]{0.45\textwidth}
%\setcaptionmargin{1in} 
\centering 
\includegraphics[width=1\textwidth]{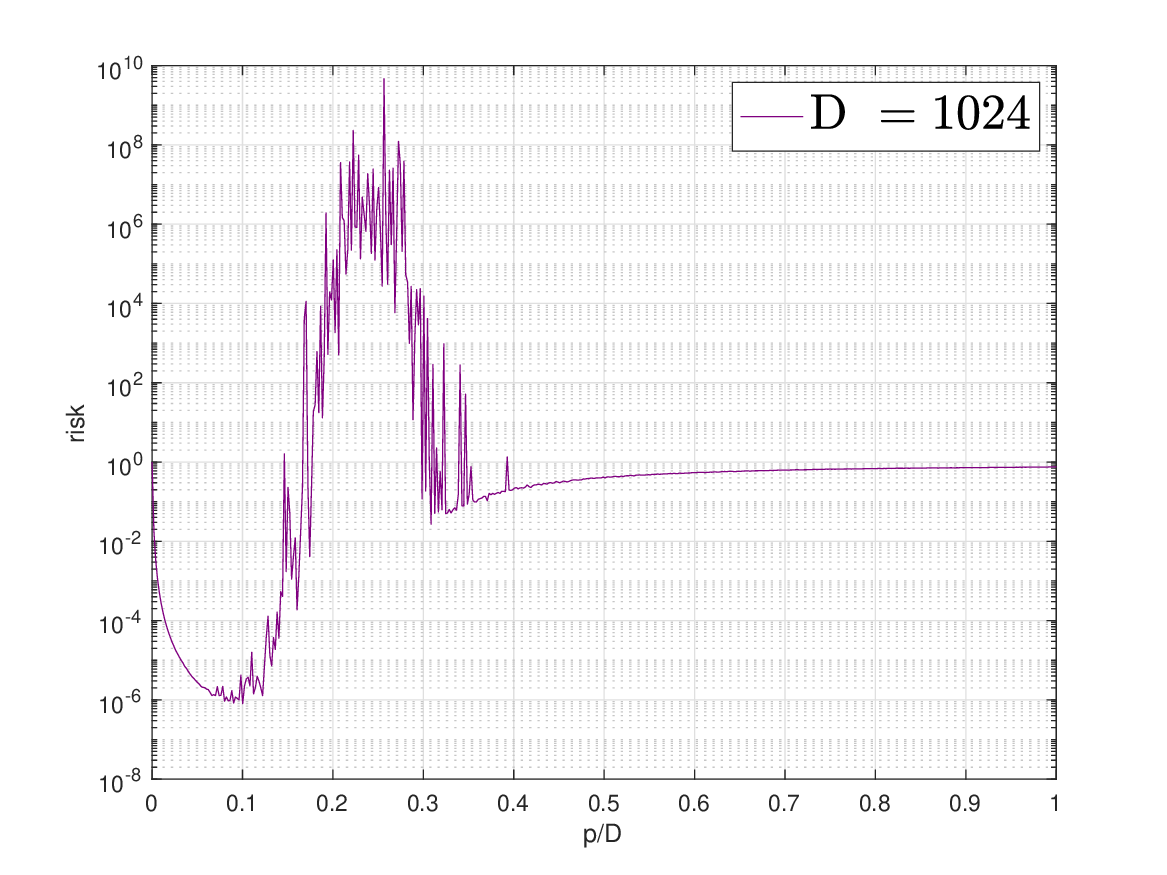} %risk_average3_beta.m
%\quad
%\subcaption{.}
%\label{Fig7}
\end{minipage}
\caption{Plot of risk as a function of $\rho_p$ in the noiseless Fourier series model. Here, $\|\pbeta\|^2=1$, $\beta_j  \propto 1/j^2$. 
$T$ is chosen as a kind of ``prescient'' selection in \cite{breiman1983many}. %\propto
We then computed $\hat{\pbeta}$ from $50$ independent random choices of $S$ (with $\rho_n = 1/4$)  and plotted the average value of $\| \pbeta   -  \hat{\pbeta} \|^2$.}
\label{Fig16}
\end{figure}

\section{Conclusion}\label{sec:7}

This work provides a   theoretical and numerical analysis of the double-descent phenomenon in the context of the least squares regression with random Fourier matrices. By using tools from random matrix theory, particularly the Stieltjes transform and the novel truncated discrete Stieltjes transform, we provide complete asymptotic risk formulas for partial DFT-based regression in the under-parameterized regime and extend prior results to the noisy case in the over-parameterized regime. The spectral properties of random Fourier matrices are rigorously analyzed, linking their behavior to the peak and descent phases of the generalization error curve. Numerical simulations confirm the theoretical predictions, demonstrating the double descent behavior and the accuracy of our derived bounds in both under-parameterized and over-parameterized settings. We hope the tools developed here, particularly the truncated  Stieltjes transform, 
 can be used for future studies of double descent in other structured random matrices or nonlinear models. Extending this analysis to noisy, non-Fourier features or adversarial perturbations could further elucidate the robustness of double descent. %Additionally, exploring connections to implicit regularization in   neural networks may bridge gaps between linear and nonlinear regimes.

\section*{Acknowledgments}
We would like to sincerely thank the reviewers for their constructive comments and insightful suggestions, especially for their valuable observations on Remark \ref{remark2point6}.
%We would like to acknowledge the assistance of volunteers in putting together this example manuscript and supplement.

%\bibliographystyle{siamplain}
\bibliography{reference}

\end{document}